\journal{Journal of Computational and Applied Mathematics}
\newtheorem{theorem}{Theorem}
\newtheorem{lemma}[theorem]{Lemma}
\newtheorem{remark}{Remark}
\DeclareMathOperator{\ddiv}{div}
\newcommand{\jump}[1]{\ensuremath{
\lbrack\!\lbrack #1 \rbrack\!\rbrack
}}
\newcommand{\ajump}{\ensuremath{a_{\textrm{\tiny J}}}}
\newcommand{\Nedelec}{N\'{e}d\'{e}lec }
\numberwithin{equation}{section}
\begin{document}

\makeatletter
\def\@author#1{\g@addto@macro\elsauthors{\normalsize%
    \def\baselinestretch{1}%
    \upshape\authorsep#1\unskip\textsuperscript{%
      \ifx\@fnmark\@empty\else\unskip\sep\@fnmark\let\sep=,\fi
      \ifx\@corref\@empty\else\unskip\sep\@corref\let\sep=,\fi
    }%
    \def\authorsep{\unskip,\space}%
    \global\let\@fnmark\@empty
    \global\let\@corref\@empty  
    \global\let\sep\@empty}%
  \@eadauthor={#1}
}
\makeatother

\begin{frontmatter}

  \title{A Nonconforming Finite Element Method for the Biot's
    Consolidation Model in Poroelasticity\tnoteref{ztitlenote}}
  \tnotetext[ztitlenote]{The research of F.~J.~Gaspar and C.~Rodrigo
    is supported in part by the Spanish project FEDER/MCYT
    MTM2013-40842-P and the DGA (Grupo consolidado PDIE). The work of
    L.~T.~Zikatanov is supported in part by the National Science
    Foundation under contracts DMS-1418843 and DMS-1522615.}

   \author{Xiaozhe Hu\corref{cor1}}
   \address{Department of Mathematics, Tufts University,
 Medford, Massachusetts  02155, USA\, {Email:}xiaozhe.hu@tufts.edu}

   \cortext[cor1]{Corresponding Author}

   \author{Carmen Rodrigo}
 \address{Departamento de Matem\'{a}tica Aplicada,
 Universidad de Zaragoza, 50009 Zaragoza, Spain\,
 {Email:} carmenr@unizar.es}

 \author{Francisco J. Gaspar}
 \address{Departamento de Matem\'{a}tica Aplicada,
  Universidad de Zaragoza, 50009 Zaragoza, Spain\,
 {Email: } fjgaspar@unizar.es}

  \author{Ludmil T. Zikatanov}
  \address{Department of Mathematics, Penn State,
University Park, Pennsylvania, 16802, USA\,
{Email:} ludmil@psu.edu\\
Institute for Mathematics and
  Informatics, Bulgarian Academy of Sciences, Sofia, Bulgaria}

  \begin{abstract}
  A stable finite element scheme that avoids pressure oscillations for
  a three-field Biot's model in poroelasticity is considered. The
  involved variables are the displacements, fluid flux (Darcy velocity), and the pore
  pressure, and they are discretized by using the lowest possible
  approximation order: Crouzeix-Raviart finite elements for the
  displacements, lowest order Raviart-Thomas-\Nedelec elements for the Darcy velocity, and piecewise constant approximation for the pressure.  Mass-lumping technique is introduced for the Raviart-Thomas-\Nedelec elements in order to eliminate the Darcy velocity and, therefore, reduce the computational cost.
  We show convergence of the discrete scheme which is implicit in time
  and use these types of elements in space with and without mass-lumping.  Finally, numerical
  experiments illustrate the convergence of the method and show its
  effectiveness to avoid spurious pressure oscillations when mass lumping for the Raviart-Thomas-\Nedelec elements is used.
\end{abstract}

  \begin{keyword}
Nonconforming finite elements \sep stable discretizations \sep monotone discretizations \sep poroelasticity.
    \end{keyword}

\end{frontmatter}

\section{Introduction: Biot's Model and Three Field Formulation} \label{sec:intro}

Poroelasticity theory mathematically describes the interaction between
the deformation of an elastic porous material and the fluid flow
inside of it. A pioneer in the mathematical modeling of such coupling
is Terzaghi~\cite{terzaghi} with his one-dimensional model.  Later, Biot~\cite{Biot1, Biot2} developed a three-dimensional
mathematical model, used to date, for quantitative and qualitative
study of poroelastic phenomena. Nowadays, the analysis and numerical
simulation of Biot's model become increasingly popular due to the wide range
of applications in medicine, biomechanics, petroleum engineering, food
processing, and other fields of science and
engineering.  

One of the main challenges in the numerical simulations based on the
Biot's models is the numerical instabilities in the approximation of the pressure variable (see~\cite{Gaspar2003,
  Ferronato2010, Langtangen2012, Favino2013, WheelerPhillip}). Such
instabilities occurs when materials have low permeability and/or a
small time step is used at the beginning of the consolidation
process. Different explanations have been provided in the literature
about the nature of these instabilities.  Usually, they are attributed
to violation of the \emph{inf-sup} condition for the Stokes problem,
or the lack of monotonicity in the discrete schemes.  Various
numerical methods to avoid this nonphysical behavior have been
analyzed. In several papers Murad, Loula, and
Thomee~\cite{MuradLoula92, MuradLoula94, MuradLoulaThome} studied the
case of stable discretizations (satisfying the inf-sup condition) for
the classical two-field formulation based on displacement and pressure
variables. As shown in~\cite{Aguilar2008, RGHZ2016}, however, the
inf-sup condition is not a sufficient condition, and for small
permeabilities the approximation to the pressure exhibits oscillations
and is numerically unstable.
A possible remedy is to add to the flow equation in the Biot's model
a time-dependent stabilization term, leading to oscillation free
approximations of the pressure. Such stabilized discretizations, based
on the MINI element~\cite{1984ArnoldD_BrezziF_FortinM-aa} as well as the P1-P1 element, are
proposed and analyzed in detail in~\cite{RGHZ2016}.
Other numerical schemes, such as least squares mixed finite element
methods, are proposed in~\cite{korsawe} and~\cite{Tchonkova} for a
four-field formulation (displacement, stress, fluid flux and
pressure). Different combinations of continuous and discontinuous
Galerkin methods and mixed finite element methods for a three-field
formulation are studied in~\cite{phillips1, phillips2, phillips3}.
Recently, conforming linear finite elements with stabilization for the three
field problem are proposed and analyzed
in~\cite{2015BergerL_BordasR_KayD_TavenerS-aa}.

Throughout this paper, we restrict our study to the quasi-static Biot's
model for soil consolidation. We assume the porous medium to be
linearly elastic, homogeneous, isotropic and saturated by an
incompressible Newtonian fluid.  According to Biot's
theory~\cite{Biot1}, then, the consolidation process must satisfy the
following system of partial differential equations:
\begin{eqnarray}
\mbox{\rm equilibrium equation:} & & -{\rm div} \, {\boldsymbol
\sigma}' +
\alpha \nabla \, p = {\bm g}, \quad {\rm in} \, \Omega, \label{eq11} \\
\mbox{\rm constitutive equation:} & & \bm{\sigma}' = 2\mu \varepsilon(\bm{u})  + \lambda\ddiv(\bm{u}) {\bm I}, \quad {\rm in} \, \Omega,
\label{eq12} \\
\mbox{\rm compatibility condition:} & & \varepsilon({\bm u}) = \frac{1}{2}(\nabla {\bm u} + \nabla
{\bm u}^t), \quad {\rm in} \, \Omega,
\label{eq13} \\
\mbox{\rm Darcy's law:} & & {\bm w} = - K \nabla
p, \quad {\rm in} \, \Omega,
\label{eq14} \\
\mbox{\rm continuity equation:} & & -
\alpha \ {\ddiv} \, \dot{\bm u}  -{\ddiv} \, {\bm w} = f, \quad {\rm in} \, \Omega,
\label{eq15}
\end{eqnarray}
where $\lambda$ and $\mu$ are the Lam\'{e} coefficients, $\alpha$ is
the Biot-Willis constant which we will assume equal to one without loss of generality, $K$ is the
hydraulic conductivity, given by the quotient between the permeability
of the porous medium $\kappa$ and the viscosity of the fluid $\eta$,
$\boldsymbol I$ is the identity tensor, ${\bm u}$ is the
displacement vector, $p$ is the pore pressure, ${\boldsymbol \sigma}'$
and $\boldsymbol \epsilon$ are the effective stress and strain tensors
for the porous medium and ${\bm w}$ is the percolation velocity of
the fluid relative to the soil. We denote the time derivative by a dot
over the letter. The right hand term ${\bm g}$ is the density of
applied body forces and the source term $f$ represents a forced fluid
extraction or injection process. Here, we consider a bounded open
subset $\Omega \subset {\mathbb R}^n,\; n \leq 3$ with regular
boundary $\Gamma$.

This mathematical model can also be written in terms of the
displacements of the solid matrix ${\bm u}$ and the pressure of
the fluid $p$. The displacement of the structure is described by
combining Hooke's law for elastic deformation with the momentum
balance equations, and the pressure of the fluid is described by
combining the fluid mass conservation with Darcy's law.
\begin{eqnarray}
&& -\ddiv\bm{\sigma}' + \nabla p = \bm{g},\qquad
\bm{\sigma}' = 2\mu \ \varepsilon(\bm{u})  + \lambda\ddiv(\bm{u}) \bm{I}, \label{two-field1}\\
&& -\ddiv \dot{\bm{u}} + \ddiv K \nabla p = f.\label{two-field2}
\end{eqnarray}
To complete the formulation of a well--posed problem we must add
appropriate boundary and  initial conditions. For instance,
\begin{equation}\label{bound-cond}
\begin{array}{ccccc}
  p = 0, & &  \quad \boldsymbol \sigma' \, {\bm n} &=& {\bm t}, \quad \hbox {on }\Gamma _t, \\
{\bm u} = {\bm 0}, & & \quad  \displaystyle K
(\nabla p)\cdot {\bm n} &=& 0, \quad \hbox {on } \Gamma _c,
\end{array}
\end{equation}
where ${\bm n}$ is the unit outward normal to the boundary and
$\Gamma_t \cup \Gamma_c = \Gamma$, with $\Gamma_t$ and $\Gamma_c$
disjoint subsets of $\Gamma$ with non null measure.
For the initial time, $t=0$, the following incompressibility condition is fulfilled
\begin{equation}\label{ini-cond}
      (\nabla \cdot {\bm u} )\, (\bm{x},0)=0, \,  \bm{x} \in\Omega.
\end{equation}

However, in many of the applications of the poroelasticity problem,
the flow of the fluid through the medium is of primary interest.
Although from the reduced displacement-pressure formulation the fluid
flux can be recovered, a natural approach is to introduce this value
as an extra primary variable instead. In this work, we are interested
in this three-field formulation of the problem. The extra unknown can
be seen as a disadvantage against the two-field formulation, regarding
the computational cost, but there are reasons to prefer this approach.
For example, the calculation of the fluid flux in post-processing is
avoided in the way that the order of accuracy in its computation is
higher and also the mass conservation for the fluid phase is ensured
by using continuous elements for the fluid flux variable. Therefore,
the governing equations of the Biot's model, with the displacement
${\bm u}$, Darcy velocity ${\bm w}$ and pressure $p$ as primary
variables are the following
\begin{eqnarray} && -\ddiv\bm{\sigma}' +
  \nabla p = \bm{g},\qquad
                                                \bm{\sigma}' = 2\mu \varepsilon(\bm{u})  + \lambda\ddiv(\bm{u}) \bm{I}, \label{three-field1}\\
                                             && {\bm w} + K \nabla p = 0, \label{three-field2}\\
                                             && -\ddiv \dot{\bm{u}} -
                                                \ddiv {\bm w} =
                                                f.\label{three-field3}
                            \end{eqnarray}

 Then, we can introduce the variational formulation for the three-field formulation of the Biot's model as follows: Find $(\bm u,\bm w, p)\in \bm V \times \bm W \times Q$ such that
\begin{eqnarray}
  && a(\bm{u},\bm{v}) - (p,\ddiv \bm{v}) = (\bm g,\bm{v}),
     \quad \forall \  \bm{v}\in \bm V, \label{variational1}\\
  && (K^{-1}\bm{w},\bm{r}) - (p,\ddiv \bm{r}) = 0, \quad \forall \ \bm{r}\in \bm W,\label{variational2}\\
  && -(\ddiv \dot{\bm{u}},q) - (\ddiv \bm{w},q)   = (f,q), \quad \forall \ q \in Q,\label{variational3}
\end{eqnarray}
where the considered functional spaces are
\begin{eqnarray*}
&&{\bm V} = \{{\bm u}\in {\bm H}^1(\Omega) \ |  \ {\bm u}|_{\Gamma _c} = {\bm 0} \},\\
&&{\bm W} = \{{\bm w} \in \bm{H}(\ddiv,\Omega) \ | \  ({\bm w}\cdot {\bm n})|_{\Gamma _c} = 0\}, \\
&&Q = L^2(\Omega),
\end{eqnarray*}
and the bilinear form $a(\bm{u},\bm{v})$ is given as the following,
\begin{equation}\label{bilinear}
a(\bm{u},\bm{v}) =
2\mu\int_{\Omega}\varepsilon(\bm{u}):\varepsilon(\bm{v}) +
\lambda\int_{\Omega} \ddiv\bm{u}\ddiv\bm{v},
\end{equation}
which corresponds to the elasticity part, and is a continuous bilinear
form.  Results on well-posedness of the continuous problem were
established by Showalter~\cite{showalter}, and, for the three field
formulation, Lipnikov~\cite{lipnikov_phd}.  In this work, we consider
a discretization of the three-field formulation with the displacement,
the Darcy velocity and the pressure as variables.  More precisely, we
use a nonconforming finite element space for the displacements
$\bm{u}$ and conforming finite element spaces for both the Darcy
velocity $\bm{w}$ and the pressure $p$.  As a time stepping technique,
we use the backward Euler method, and, we show that the resulting
fully discrete scheme has an optimal convergence order.  A similar
discretization was considered in \cite{Yi.S.2013a} for the 2D case on
rectangular grids.  Our scheme here works for both 2D and 3D cases on
simplicial meshes and has the potential to be extended to more general
meshes using the elements developed in
\cite{Di-Pietro.D;Lemaire.S2015a} and \cite{Kuznetsov.Y;Repin.S2003}.
We also show, albeit only numerically, that this nonconforming
three-field scheme produces oscillation-free numerical approximations
when mass-lumping is used in the Raviart-Thomas discretization for the
Darcy velocity. A four-field (the stress tensor, the
  fluid flux, the solid displacement, and the pore pressure)
  discretization has also been proposed and analyzed in
  \cite{Yi.S.2014a}. In \cite{Lee.J2015a}, an improved a priori error
  analysis for the four-field formulation has been discussed, in which
  the error estimates of all the unknowns are robust for material
  parameters.  We comment that our scheme with mass-lumping could be
  generalized to the four-field formulation as well.  


  The rest of the paper is organized as follows.  In Section
  \ref{sec:NC-FEM}, we introduce our nonconforming spatial
  semi-discrete scheme. The fully discrete scheme is discussed in
  Section \ref{sec:fully}, where we show the well-posedness of the
  discrete problem and derive error estimates for the nonconforming
  finite element approximations.  Section \ref{sec:numerics} is
  devoted to the numerical study of the convergence and tests the
  monotone behavior of the scheme and conclusions are drawn in
  Section \ref{sec:conclusion}.


%

\section{Nonconforming Discretization}\label{sec:NC-FEM}



In this section, we consider spatial semi-discretization using a
nonconforming finite element method.  We cover $\Omega$ by simplices
(triangles in 2D and tetrahedra in 3D) and have the following
finite element discretization corresponding to the three-field
formulation~\eqref{variational1}-\eqref{variational3}: Find
$(\bm u_h,\bm w_h, p_h)\in \bm V_h\times \bm W_h\times Q_h$ such that
\begin{eqnarray}
&& a_h(\bm{u}_h,\bm{v}_h) - (p_h,\ddiv \bm{v}_h) = (\bm{g}(t),\bm{v}_h),
\quad \forall \
\bm{v}_h\in \bm V_h,
\label{eqn:u_h-semi}
\\
&& (K^{-1}\bm{w}_h,\bm{r}_h)_h - (p_h,\ddiv \bm{r}_h) = 0, \quad \forall  \ \bm{r}_h \in \bm W_h
\label{eqn:w_h-semi}
\\
&& -(\ddiv \dot{\bm{u}}_h,q_h) - (\ddiv \bm{w}_h,q_h)   = (f(t),q_h),
\quad \forall \  q_h\in Q_h.
\label{eqn:p_h-semi}
\end{eqnarray}
Here, $\bm V_h$ is the Crouzeix-Raviart finite element
space~\cite{1973CrouzeixM_RaviartP-aa}, $\bm W_h$ the lowest order
Raviart-Thomas-\Nedelec
space~\cite{1977RaviartP_ThomasJ-aa,1986NedelecJ-aa,1980NedelecJ-aa},
and $Q_h$ is the space of piecewise constant functions (with respect to
the triangulation $\mathcal{T}_h$). Further details are discussed later in this section. 

\subsection{Interfaces, Normal Vectors and Jumps of Traces of Functions}
Let us first introduce some notation.  We denote the set of faces
(interfaces) in the triangulation $\mathcal{T}_h$ by $\mathcal{E}$ and
introduce the set of boundary faces $\mathcal{E}^{\partial}$, and the
set of interior faces $\mathcal{E}^o$. We have
$\mathcal{E} =\mathcal{E}^o\cup\mathcal{E}^\partial$.

Let us fix $e\in \mathcal{E}^o$ and let $T\in \mathcal{T}_h$ be such
that $e\in \partial T$. We set $\bm{n}_{e,T}$ to be the unit outward
(with respect to $T$) normal vector to $e$.  In addition, with every
face $e\in \mathcal{E}^o$, we also associate a unit vector $\bm{n}_e$
which is orthogonal to the $(d-1)$ dimensional affine variety (line in
2D and plane in 3D) containing the face. For the boundary faces, we
always set $\bm{n}_e=\bm{n}_{e,T}$, where $T$ is the \emph{unique}
element for which we have $e\in \partial T$.  In our setting, for the
interior faces, the particular direction of $\bm{n}_e$ does not really
matter, although it is important that this direction is fixed for
every face. Thus, for $e\in \mathcal{E}$, we define $T^{+}(e)$ and
$T^{-}(e)$ as follows:
\begin{equation*}
T^{\pm}(e):= \{T\in \mathcal{T}_h\  \mbox{such that} \ e\in \partial T, \
\mbox{and} \ (\bm{n}_{e}\cdot\bm{n}_{e,T})=\pm 1\}.
\end{equation*}
It is immediate to see that both sets defined above contain \emph{no
  more than} one element, that is: for every face we have exactly one
$T^{+}(e)$ and for the interior faces we also have exactly one
$T^{-}(e)$. For the boundary faces we only have $T^{+}(e)$. In the
following, we write $T^\pm$ instead of $T^\pm(e)$, when
this does not cause confusion and ambiguity.

Next, for a given function $u$ (vector or scalar valued) its jump
across an interior face $e\in \mathcal{E}^o$ is denoted by
$\jump{u}_e$, and defined as
\[
\jump{u}_e(x) = u_{T^+(e)}(x) - u_{T^-(e)}(x), \quad x\in e.
\]

\subsection{Finite Element Spaces}\label{FEM_method}
We now give the definitions of the finite element spaces
used in the semi-discretization~\eqref{eqn:u_h-semi}--\eqref{eqn:p_h-semi}.

\subsubsection{Nonconforming Crouzeix-Raviart Space}
The Crouzeix-Raviart space $\bm{V}_h$ consists of vector valued
functions which are linear on every element $T\in \mathcal{T}_h$ and
satisfy the following continuity conditions
\begin{eqnarray*}
&&\bm V_h = \left\{\bm v_h \in {\bm{L}}^2(\Omega) \;\bigg| \int_e\jump{\bm v_h}_e =0,\;\;
   \mbox{for all}\; e\in \mathcal{E}^o\right\}.
\end{eqnarray*}
Equivalently, all functions from $\bm V_h$ are continuous at the
barycenters of the faces in $\mathcal{E}^o$. For the boundary faces, the
elements of $\bm V_h$ are zero in the barycenters of any face on the
Dirichlet boundary.

\subsubsection{Raviart-Thomas-\Nedelec Space}
We now consider the standard lowest order Raviart-Thomas-\Nedelec space
$\bm{W}_h$.  Recall that every element $\bm{v}_h\in \bm{W}_h$ can be
written as
\begin{equation}\label{eq:RT}
\bm{v}_h = \sum_{e\in \mathcal{E}_h} e(\bm{v}_h )\bm{\psi}_e(\bm{x}),
\end{equation}
Here, $e(\cdot)$ denotes the functional (as known as the degree of
freedom) associated with the face $e\in \mathcal{E}$ and its action on
a function $\bm v$ for which $\bm v \cdot \bm n_e$ is in $L^2(e)$ is
defined as
\[
e(\bm{v}) = \int_e\bm v\cdot\bm n_e.
\]
To define $\bm{W}_h$, we only need to define the basis functions
$\bm{\psi}_e$, for $e\in \mathcal{E}$, dual to the degrees of freedom
$e(\cdot)$.  If $e$ is the face opposite to the vertex $P_e$ of the
triangle/tetrahedron $T$, then
\begin{equation}\label{eq:RT-basis}
\bm{\psi}_e\big|_T =
\frac{(\bm n_e\cdot \bm n_{e,T})}{d|T|}(\bm{x}-\bm{x}_{P_e})=
\pm \frac{1}{d|T|}(\bm{x}-\bm{x}_{P_e}).
\end{equation}

We note that explicit formulae similar to~\eqref{eq:RT-basis} are
available also for the case of lowest order Raviart-Thomas-\Nedelec
elements on $d$-dimensional rectangular elements (parallelograms or
rectangular parallelepipeds). For any such element
$T \in \mathcal{T}_h$ with faces parallel to the coordinate planes
(axes) let $\bm{\psi}_{k}^{\pm}$ denote  the basis function
corresponding to the functional $F_k^{\pm}(\cdot)$.
Clearly, the outward normal vectors to the faces of such an element
are the $\pm\bm{e}_k$, $k=1,\ldots,d$, where $\bm{e}_k$ is the $k$-th
coordinate vector in $\mathbb{R}^d$. Let
$\bm{x}_{M,k}^{\pm} \in \mathbb{R}^d$ be the mass center of the face
$F_{k}^{\pm}$, $k=1,\ldots,d$. We then have
\begin{equation}\label{eq:RT-basis-cube}
\bm{\psi}_k^{\pm}(\bm{x}) = \frac{(\bm{x}-\bm{x}_{M,k}^{\mp})^T \bm{e}_k  }{|T|} \bm{e}_k .
\end{equation}
 From this formula we see that over the finite element $T$ the basis functions
 $ \bm{\psi}_k^{\pm}(\bm{x})$ are linear in $\bm{x}_k$ and
 constant in the remaining variables in $\mathbb{R}^d$.

\subsubsection{Piecewise Constant Space} For approximating the
pressure,  we use the piecewise constant space spanned by the
characteristic functions of the elements, i.e.
$Q_h = \operatorname{span}\{\chi_T\}_{T\in\mathcal{T}_h}$.

\subsection{Approximate Variational Formulation}
We first consider the bilinear form
$a_h(\cdot,\cdot): \bm V_h \times \bm V_h\mapsto \mathbb{R}$.  Before
we write out the details, we have to assume that $\Gamma_c$ is
non-empty.  If $\Gamma_c = \emptyset$, i.e., $\Gamma_t = \Gamma$ (the
pure traction problem), $a(\cdot,\cdot)$ is a positive semidefinite
form and the dimension of its null space equals the number of edges on
the boundary (for both 2D and 3D). Therefore, the Korn's inequality
fails.  Even if $\Gamma_c \neq \emptyset$, for some cases, Korn's
inequality may fail for the standard discretization by
Crouzeix-Raviart elements without additional stabilization.  In summary, 
if we simply take $a_h(\cdot,\cdot) = a(\cdot,\cdot)$
then it does not satisfy the discrete Korn inequality and, therefore,
$a_h(\cdot,\cdot)$ is not coercive.  Moreover, it is also possible
that Korn's inequality hold, but the constant will approach infinity
as the mesh size $h$ approaches zero.  In another words, if we use
$a_h(\cdot,\cdot) = a(\cdot,\cdot)$, the coercivity constant blows up
when $h$ approaches zero.  For discussions on nonconforming linear
elements for elasticity problems and discrete Korn's inequality, we
refer to \cite{FalkR-1991aa, 1990FalkR_MorleyM-aa} for more details.




One way to fix the potential problem is to add stabilization.  The following perturbation of the bilinear form which does satisfy the Korn's inequality was proposed by Hansbo and Larson~\cite{HansboP_LarsonM-2003aa}.
\begin{eqnarray*}
  a_h(\bm v, \bm w) = a(\bm v, \bm w) + \ajump(\bm v,\bm w),\quad \mbox{where}\quad
  \ajump(\bm v,\bm w) = 2\mu\gamma_1
\sum_{e\in \mathcal{E}} h_e^{-1}\int_e \jump{\bm v}_e\jump{\bm w}_e.
\end{eqnarray*}
Here, the constant $\gamma_1>0$ is a fixed real number away from $0$
(i.e. $\gamma_1=\frac12$ is an acceptable choice).  As shown in Hansbo
and Larson~\cite{HansboP_LarsonM-2003aa} the bilinear form
$a_h(\cdot,\cdot)$ is positive definite and the corresponding error is
of optimal (first) order in the corresponding energy norm. Moreover,
the resulting method is \emph{locking free} and we use such
$a_h(\cdot, \cdot)$ in our nonconforming scheme.

\begin{remark}
  In~\cite{HansboP_LarsonM-2003aa}, the jump term $\ajump(\cdot, \cdot)$
  includes all the edges, i.e., the stabilization needs to be done on
  both interior and boundary edges.  In \cite{2004BrennerS-aa}, it has
  been shown that the jump stabilization only needs to be added to the
  interior edges and boundary edges with Neumann boundary conditions
  and the discrete Korn's inequality still holds.  In fact, in
  \cite{2006MardalK_WintherR-aa}, it is suggested that only the normal
  component of the jumps on the edges is needed for the stabilization
  in order to satisfy the discrete Korn's equality.  
\end{remark}

We next consider the bilinear form in~\eqref{eqn:w_h-semi}, denoted by
$(\cdot,\cdot)_h$. The first choice for such a form is just taking the
usual $L^2(\Omega)$ inner product, i.e.
$(\bm{w}, \bm{r})_h = (\bm{w}, \bm{r}) = \int_{\Omega} \bm{w} \cdot
\bm{r}$.
This is a standard choice and leads to a mass matrix in the
Raviart-Thomas-\Nedelec element when we write out the matrix form.

The second choice, which is the bilinear form we use here, is based on mass lumping in the Raviart-Thomas space, i.e.,
\begin{equation}\label{lump}
(\bm r, \bm s)_h = \sum_T\sum_{e\subset \partial T} \omega_e \ e(\bm r)e(\bm s).
\end{equation}
We refer to \cite{2006BrezziF_FortinM_MariniL-aa} and
\cite{1996BarangerJ_MaitreJ_OudinF-aa} for details on determining the
weights $\omega_e$, which are $\omega_e = \displaystyle\frac{|e|d_{e}}{d}$ with $d_e$ being
the signed distance between the Voronoi vertices adjacent to the face
$e$.  Such weights, in the two-dimensional case, are chosen so that
\begin{equation}\label{def:lumping}
(\bm w, \bm r)_h = \int_{\Omega}\bm w\cdot \bm r, \quad \bm w, \bm r \in \bm W_h \ \text{and} \ \bm{w}, \bm{r} \ \text{are piecewise constants},
\end{equation}
which implies the equivalence between $(\bm{w}, \bm{r})_h$ and the
standard $L^2$ inner product $(\bm{w}, \bm{r})$.  The situation in 3D
is a little bit involved since \eqref{def:lumping} in general does not
hold. Nevertheless, in
  \cite{2006BrezziF_FortinM_MariniL-aa}, it has been shown that the
  mixed formulation for Poisson equation using the mass-lumping
  maintains the optimal convergence order, which is what we need for
  the convergence analysis of our scheme later in Section
  \ref{sec:fully}.  
\begin{remark}
  As shown in \cite{2006BrezziF_FortinM_MariniL-aa}, the mass-lumping
  technique is quite general and works for both two- and
  three-dimensional cases.  For the convergence analysis of the
  mass-lumping, they assume that the circumcenters are inside the
  simplex.  Such partition exists in general (see
  \cite{Brandts.J;Korotov.S;Krizek.M;Solc.J2009a}).  Moreover, they
  also pointed out that this assumption is not strictly necessary and
  can be relaxed.  When the mesh contains pairs of right triangles in
  2D and right tetrahedrons in 3D, $d_e$ degenerates to zero and so is
  the weight $\omega_e$.  However, we can remedy by combining the
  pressure unknowns on these pairs to just one pressure unknown.
\end{remark}

In practice, such lumped mass approximation results in a block
diagonal matrix and, therefore, we can eliminate the Darcy velocity
$\bm{w}$ and reduce the three-field formulation to two-field
formulation involving only displacement $\bm{u}$ and pressure $p$.  In
practice, such elimination reduces the size of the linear system that
needs to be solved at each time step and save computational
cost. In the literature, there have been other similar
  techniques for eliminating the Darcy velocity $\bm{w}$.  For
  example, numerical integration
  \cite{Micheletti.S;Sacco.R;Saleri.F2001a} and multipoint flux mixed
  formulation \cite{Wheeler.M;Xue.G;Yotov.I2012a}. In addition, for
Biot's model, as shown by numerical experiments in Section
\ref{sec:numerics}, the lumped mass approximation actually gives an
oscillation-free approximation while maintains the optimal error
estimates.



\section{Analysis of the Fully Discrete Scheme} \label{sec:fully}

In this section, we consider the fully discrete scheme
of \eqref{variational1}--\eqref{variational3} at
time $t_n = n \tau$, $n = 1,2, \ldots$ as
following: Find
$(\bm u_h^n,\bm w_h^n, p_h^n)\in \bm V_h\times \bm W_h\times Q_h$ such
that
\begin{eqnarray}
&& a_h(\bm{u}_h^n,\bm{v}_h) - (p_h^n,\ddiv \bm{v}_h) = (\bm{g}(t_n),\bm{v}_h),
\quad\forall \
\bm{v}_h \in \bm V_h,
\label{eqn:u_h^n}
\\
&& (K^{-1}\bm{w}_h^n,\bm{r}_h)_h - (p_h^n,\ddiv \bm{r}_h) = 0,
\quad \forall \ \bm{r}_h \ \in \bm W_h
\label{eqn:w_h^n}
\\
&& -(\ddiv \bar{\partial}_t \bm{u}_h^n,q_h) - (\ddiv \bm{w}_h^n,q_h)   = (f(t_n),q_h),
\quad\forall \ q_h\in Q_h,
\label{eqn:p_h^n}
\end{eqnarray}
where $\tau$ is the time step size and
$\bar{\partial}_t \bm{u}_h^n: = (\bm{u}_h^n - \bm{u}_h^{n-1})/\tau$.
For the initial data $\bm{u}_h^0$, we use the discrete counterpart of
\eqref{ini-cond}, i.e.,
\begin{equation}\label{def:u_h^0}
\ddiv \bm{u}_h^0 = 0.
\end{equation}
We will first consider the well-posedness of the linear system
\eqref{eqn:u_h^n}-\eqref{eqn:p_h^n} at each time step $t_n$ and then
derive the error estimates for the fully discrete scheme.

\subsection{Well-posedness}
We consider the following linear system derived from
\eqref{eqn:u_h^n}-\eqref{eqn:p_h^n}: Find
$(\bm u_h,\bm w_h, p_h)\in \bm V_h\times \bm W_h\times Q_h$ such that
\begin{eqnarray}
&& a_h(\bm{u}_h,\bm{v}_h) - (p_h,\ddiv \bm{v}_h) = (\bm{g},\bm{v}_h),
\quad\forall \
\bm{v}_h\in \bm V_h,
\label{eqn:u_h}
\\
&& \tau (K^{-1}\bm{w}_h,\bm{r}_h)_h - \tau (p_h,\ddiv \bm{r}_h) = 0,
\quad \forall \ \bm{r}_h \ \in \bm W_h
\label{eqn:w_h}
\\
&& - (\ddiv \bm{u}_h,q_h) -  \tau (\ddiv \bm{w}_h,q_h)   = (\tilde{f},q_h),
\quad \forall \ q_h\in Q_h,
\label{eqn:p_h}
\end{eqnarray}
Here, to simplify the presentation, we have omitted the superscript
$n$ because the results are independent of the time step. We have
denoted $\tilde{f} := \tau f(t_n) - \ddiv\bm{u}_h^{n-1}$, and we note
that the relations \eqref{eqn:w_h} and \eqref{eqn:p_h} are obtained by
multiplying \eqref{eqn:w_h^n} and \eqref{eqn:p_h^n} with the time step
size $\tau$.

We equip the space $\bm{V} \times \bm{W} \times Q$ with the following norm
\begin{equation}\label{def:norm}
\| (\bm{u}, \bm{w}, p) \|_{\tau} : = \left(  \| \bm{u} \|^2_1 + \tau \| \bm{w} \|^2 + \tau^2 \| \ddiv \bm{w} \|^2 + \| p \|^2  \right)^{1/2},
\end{equation}
where $\| \cdot \|_1$ and $\| \cdot \|$ denote the standard $H^1$ norm
and $L^2$ norm, respectively.
In the analysis we need the following composite bilinear form (including all variables):
\begin{align*}
B(\bm{u}_h, \bm{w}_h, p_h; \bm{v}_h, \bm{r}_h, q_h) & :=
a_h(\bm{u}_h, \bm{v}_h) - (p_h, \ddiv \bm{v}_h) + \tau (K^{-1}\bm{w}_h,\bm{r}_h)_h \\
& \quad - \tau (p_h,\ddiv \bm{r}_h) - (\ddiv \bm{u}_h,q_h) - \tau(\ddiv \bm{w}_h,q_h).
\end{align*}
Note that
\begin{equation} \label{divW-Q}
\ddiv \bm{W}_h \subseteq Q_h.
\end{equation}
Further, we note the following continuity, coercivity and stability (inf-sup) conditions
on the bilinear forms involved in the definition of $B(\cdot, \cdot, \cdot; \cdot, \cdot, \cdot)$:
\begin{align}
&a_h(\bm{u}_h, \bm{v}_h)  \leq C_{\bm{V}} \| \bm{u}_h \|_1 \| \bm{v}_h \|_1, \quad \forall \ \bm{u}_h, \bm{v}_h \in \bm{V}_h, \label{ine:continuity-a_h}\\
&a_h(\bm{u}_h, \bm{u}_h)  \geq \alpha_{\bm{V}} \| \bm{u}_h \|_1^2, \quad \forall \ \bm{u}_h \in \bm{V}_h, \label{ine:coercivity}  \\
&c_K \| \bm{w}_h \|^2  \leq (K^{-1} \bm{w}_h, \bm{w}_h)_h \leq C_K \| \bm{w}_h \|^2, \quad \forall \ \bm{w}_h \in \bm{W}_h,  \label{ine:mass-lump-equiv}\\
& \inf_{p_h \in Q_h} \sup_{\bm{u}_h \in \bm{V}_h} \frac{(\ddiv\bm{u}_h, p_h)}{\| \bm{u}_h \|_1 \| p_h \|} = \beta_{\bm{V}} > 0. \label{ine:inf-sup-V}
\end{align}
Under these conditions, which are satisfied by our choice of finite element spaces and discrete bilinear forms, we have the following theorem showing the solvability of the
linear system~\eqref{eqn:u_h}-\eqref{eqn:p_h}.
\begin{theorem}
If the conditions~\eqref{divW-Q}-\eqref{ine:inf-sup-V} hold, then the bilinear form $B(\cdot, \cdot, \cdot; \cdot, \cdot, \cdot)$ satisfies the following inf-sup condition,
\begin{equation}\label{infsup-B}
\sup_{(\bm{v}_h, \bm{r}_h, q_h) \in \bm V_h\times \bm W_h\times Q_h}  \frac{B(\bm{u}_h, \bm{w}_h, p_h; \bm{v}_h, \bm{r}_h, q_h)}{\|  \left( \bm{v}_h,  \bm{r}_h,  q_h  \right) \|_{\tau}} \geq \gamma \| \left( \bm{u}_h,  \bm{w}_h, p_h \right) \|_{\tau}
\end{equation}
with a constant $\gamma > 0$ independent of mesh size $h$ and
time step size $\tau$.  Moreover, the three field formulation
\eqref{eqn:u_h}-\eqref{eqn:p_h} is well-posed.
\end{theorem}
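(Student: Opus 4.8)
The plan is to verify the inf-sup condition~\eqref{infsup-B} directly, by constructing for each trial triple $(\bm{u}_h,\bm{w}_h,p_h)\in\bm{V}_h\times\bm{W}_h\times Q_h$ an explicit test triple $(\bm{v}_h,\bm{r}_h,q_h)$ with $B(\bm{u}_h,\bm{w}_h,p_h;\bm{v}_h,\bm{r}_h,q_h)\geq c_0\|(\bm{u}_h,\bm{w}_h,p_h)\|_\tau^2$ and $\|(\bm{v}_h,\bm{r}_h,q_h)\|_\tau\leq C_1\|(\bm{u}_h,\bm{w}_h,p_h)\|_\tau$, where $c_0,C_1>0$ depend only on the constants in~\eqref{ine:continuity-a_h}--\eqref{ine:inf-sup-V} (hence on neither $h$ nor $\tau$); dividing then gives~\eqref{infsup-B} with $\gamma=c_0/C_1$. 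The test triple is assembled from three building blocks, each controlling one part of the norm~\eqref{def:norm}.

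First, the ``symmetric'' choice $(\bm{v}_h,\bm{r}_h,q_h)=(\bm{u}_h,\bm{w}_h,-p_h)$: all the coupling terms $\pm(p_h,\ddiv\cdot)$ and $\pm(\ddiv\cdot,q_h)$ cancel in pairs, and by~\eqref{ine:coercivity} and~\eqref{ine:mass-lump-equiv},
\[
B(\bm{u}_h,\bm{w}_h,p_h;\bm{u}_h,\bm{w}_h,-p_h)=a_h(\bm{u}_h,\bm{u}_h)+\tau(K^{-1}\bm{w}_h,\bm{w}_h)_h\ \geq\ \alpha_{\bm{V}}\|\bm{u}_h\|_1^2+\tau c_K\|\bm{w}_h\|^2,
\]
which handles the $\|\bm{u}_h\|_1^2$ and $\tau\|\bm{w}_h\|^2$ pieces. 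To recover $\|p_h\|^2$, I use the inf-sup condition~\eqref{ine:inf-sup-V} to pick $\bm{z}_h\in\bm{V}_h$ with $(\ddiv\bm{z}_h,p_h)=\|p_h\|^2$ and $\|\bm{z}_h\|_1\leq\beta_{\bm{V}}^{-1}\|p_h\|$, and add the perturbation $(-\delta_1\bm{z}_h,0,0)$; its contribution equals $-\delta_1 a_h(\bm{u}_h,\bm{z}_h)+\delta_1\|p_h\|^2$, which by~\eqref{ine:continuity-a_h} and Young's inequality is at least $\tfrac{\delta_1}{2}\|p_h\|^2-c\delta_1\|\bm{u}_h\|_1^2$. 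To recover $\tau^2\|\ddiv\bm{w}_h\|^2$, I use~\eqref{divW-Q}: since $\ddiv\bm{w}_h\in Q_h$, the perturbation $(0,0,-\delta_2\tau\ddiv\bm{w}_h)$ is admissible, and its contribution equals $\delta_2\tau(\ddiv\bm{u}_h,\ddiv\bm{w}_h)+\delta_2\tau^2\|\ddiv\bm{w}_h\|^2$, which after bounding $\|\ddiv\bm{u}_h\|\leq C\|\bm{u}_h\|_1$ (with $C$ independent of $h$) and applying Young's inequality to the cross term is at least $\tfrac{\delta_2}{2}\tau^2\|\ddiv\bm{w}_h\|^2-c\delta_2\|\bm{u}_h\|_1^2$.

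I then take the combined test triple $(\bm{v}_h,\bm{r}_h,q_h)=(\bm{u}_h-\delta_1\bm{z}_h,\,\bm{w}_h,\,-p_h-\delta_2\tau\ddiv\bm{w}_h)$, so that by linearity of $B$ in its second set of arguments the three estimates add up; choosing $\delta_1,\delta_2>0$ small enough (depending only on $\alpha_{\bm{V}}$, $C_{\bm{V}}$, $\beta_{\bm{V}}$ and $C$) that the two negative $\|\bm{u}_h\|_1^2$ terms are absorbed into $\tfrac12\alpha_{\bm{V}}\|\bm{u}_h\|_1^2$ yields
\[
B(\bm{u}_h,\bm{w}_h,p_h;\bm{v}_h,\bm{r}_h,q_h)\ \geq\ \tfrac{\alpha_{\bm{V}}}{2}\|\bm{u}_h\|_1^2+\tau c_K\|\bm{w}_h\|^2+\tfrac{\delta_2}{2}\tau^2\|\ddiv\bm{w}_h\|^2+\tfrac{\delta_1}{2}\|p_h\|^2\ \geq\ c_0\,\|(\bm{u}_h,\bm{w}_h,p_h)\|_\tau^2.
\]
The bound $\|(\bm{v}_h,\bm{r}_h,q_h)\|_\tau\leq C_1\|(\bm{u}_h,\bm{w}_h,p_h)\|_\tau$ follows from the triangle inequality together with $\|\bm{z}_h\|_1\leq\beta_{\bm{V}}^{-1}\|p_h\|$ and $\|p_h+\delta_2\tau\ddiv\bm{w}_h\|^2\leq 2\|p_h\|^2+2\delta_2^2\tau^2\|\ddiv\bm{w}_h\|^2$. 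Finally, for well-posedness: \eqref{eqn:u_h}--\eqref{eqn:p_h} is a square finite-dimensional linear system, and~\eqref{infsup-B} says its associated operator is bounded below, hence injective, hence an isomorphism; the same estimate~\eqref{infsup-B} supplies the a priori stability bound on the data.

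I expect the main obstacle to be keeping $\gamma$ independent of the time step $\tau$. The powers of $\tau$ must be tracked so that the cross term $\delta_2\tau(\ddiv\bm{u}_h,\ddiv\bm{w}_h)$ in the third building block splits as $\tfrac{\delta_2}{2}\|\ddiv\bm{u}_h\|^2+\tfrac{\delta_2}{2}\tau^2\|\ddiv\bm{w}_h\|^2$ with no stray negative power of $\tau$, and so that the $\tau$- and $\tau^2$-weights carried by $\bm{w}_h$ in the norm~\eqref{def:norm} match exactly the $\tau$-weights on the $(K^{-1}\cdot,\cdot)_h$ and $(\ddiv\cdot,\cdot)$ terms of $B$ --- which is precisely why~\eqref{def:norm} is weighted as it is. The remaining care is purely bookkeeping: the small parameters $\delta_1,\delta_2$ and the constant in $\|\ddiv\bm{v}_h\|\lesssim\|\bm{v}_h\|_1$ must be seen to depend only on the mesh- and $\tau$-independent constants appearing in~\eqref{ine:continuity-a_h}--\eqref{ine:inf-sup-V}.
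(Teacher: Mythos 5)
Your proposal is correct and follows essentially the same route as the paper's proof: the identical test triple $(\bm{u}_h-\theta_1\bm{h}_h,\ \bm{w}_h,\ -(p_h+\theta_2\tau\ddiv\bm{w}_h))$ built from the inf-sup element of \eqref{ine:inf-sup-V} and the inclusion \eqref{divW-Q}, with Young's inequality absorbing the cross terms and a triangle-inequality bound on the test-triple norm before dividing to get $\gamma$. Your organization into three ``building blocks'' and the finite-dimensional injectivity argument for well-posedness are only presentational differences from the paper's direct expansion of $B$.
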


\begin{proof}
  According to the inf-sup condition \eqref{ine:inf-sup-V}, we have
  for $p_h$, there exists $\bm{h}_h \in \bm{V}_h$, such that
\begin{equation}
(\ddiv \bm{h}_h, p_h) \geq \beta_{\bm{V}} \| p_h \|^2, \quad \| p_h \|= \| \bm{h}_h \|_1.
\end{equation}
Note that, according to the condition~\eqref{divW-Q},
$\ddiv \bm{w}_h \in Q_h$. Let
$\bm{v}_h = \bm{u}_h - \theta_1 \bm{h}_h$,
$\bm{r}_h = \bm{w}_h$,
$q_h = -(p_h + \theta_2 \tau \ddiv \bm{w}_h)$,
then we have,
\begin{align*}
  B(\bm{u}_h, \bm{w}_h, p_h; \bm{v}_h, \bm{r}_h, q_h)  & =
a_h(\bm{u}_h, \bm{u}_h - \theta_1 \bm{h}_h)
- (p_h, \ddiv (\bm{u}_h - \theta_1 \bm{h}_h) ) +  \tau (K^{-1} \bm{w}_h, \bm{w}_h)_h - \tau (p_h, \ddiv \bm{w}_h)
  \\ &\quad \ - (\ddiv \bm{u}_h, -p_h -\theta_2 \tau \ddiv \bm{w}_h ) - \tau (\ddiv \bm{w}_h,
-p_h - \theta_2 \tau \ddiv \bm{w}_h)
  \\ &  = \| \bm{u} _h \|_{a_h}^2 - \theta_1 a_h(\bm{u}_h, \bm{h}_h) + \theta_1 (p_h, \ddiv \bm{h}_h) + \tau ( K^{-1} \bm{w}_h, \bm{w}_h )_h
  \\ & \quad +  \theta_2 (\ddiv \bm{u}_h, \tau \ddiv \bm{w}_h) + \theta_2 \tau^2  \| \ddiv \bm{w}_h \|^2
  \\ & \geq \| \bm{u}_h \|_{a_h}^2 - \frac{\theta_1 \epsilon_1}{2} \| \bm{u}_h \|_{a_h}^2 - \frac{\theta_1}{2 \epsilon_1} \| \bm{h}_h \|_{a_h}^2 + \theta_1 \beta_{\bm{V}} \| p_h \|^2 +  c_K \,\tau \| \bm{w}_h \|^2
  \\ & \quad - \frac{\theta_2 \epsilon_2}{2} \| \ddiv \bm{u}_h \|^2 - \frac{\theta_2}{2 \epsilon_2} \tau^2 \| \ddiv \bm{w}_h \|^2 + \theta_2  \tau^2 \| \ddiv \bm{w}_h \|^2
  \\ &  \geq \left( 1 - \frac{\theta_1 \epsilon_1}{2}  \right) \| \bm{u}_h \|_{a_h}^2 - \frac{\theta_1 C_{\bm V}}{2 \epsilon_1}\| p_h \|^2 + \theta_1 \beta_{\bm{V}} \| p_h \|^2 + c_K \,  \tau \| \bm{w}_h \|^2
  \\ &  \quad- \theta_2 \epsilon_2 \| \bm{u}_h \|_1^2 - \frac{\theta_2}{2 \epsilon_2} \tau^2 \| \ddiv \bm{w}_h \|^2 + \theta_2 \tau^2 \| \ddiv \bm{w}_h \|^2
  \\ & \geq \left[ \alpha_{\bm{V}} \left( 1 - \frac{\theta_1 \epsilon_1}{2} \right)  - \theta_2 \epsilon_2\right] \| \bm{u}_h \|_1^2 +  c_K \,\tau \| \bm{w}_h \|^2 \\
                                                       & \quad + \theta_2 \left( 1 - \frac{1}{2\epsilon_2 }  \right) \tau^2 \| \ddiv \bm{w}_h \|^2  + \left[  \theta_1 \left( \beta_{\bm{V}} - \frac{C_{\bm V}}{2 \epsilon_1} \right) \right] \| p_h \|^2.
\end{align*}
Choose $\theta_1 = \frac{\beta_{\bm{V}}}{C_{\bm{V}}}$, $\epsilon_1 = \frac{C_{\bm{V}}}{\beta_{\bm{V}}}$, $\theta_2 = \frac{\alpha_{\bm{V}}}{3}$, and $\epsilon_2 = 1$, we have
\begin{align*}
B(\bm{u}_h, \bm{w}_h, p_h; \bm{v}_h, \bm{r}_h,q_h) & \geq \frac{\alpha_{\bm{V}}}{6} \| \bm{u}_h \|_1^2 + c_K \,\tau \| \bm{w}_h \|^2  + \frac{\alpha_{\bm{V}}}{6}  \tau^2 \| \ddiv \bm{w}_h \|^2 + \frac{\beta_{\bm{V}}^2}{2 C_{\bm{V}}} \| p_h \|^2
	\\ & \geq C_1 \| \left(  \bm{u}_h, \bm{w}_h, p_h \right) \|_{\tau}^2,
\end{align*}
where $\displaystyle C_1 = \min \left\{  \frac{\alpha_{\bm{V}}}{6},  c_K ,  \frac{\beta_{\bm{V}}^2}{2 C_{\bm{V}}}\right\} $.

On the other hand, we have
\begin{align*}
\| \left( \bm{v}_h, \bm{r}_h, q_h \right) \|_{\tau}^2 & = \| \bm{u}_h - \theta_1 \bm{h}_h \|_1^2 + \tau \| \bm{w}_h \|^2 + \tau^2 \| \ddiv \bm{w}_h\|^2 + \|-p_h - \theta_2 \tau \ddiv \bm{w}_h \|^2
	\\ & \leq 2 \| \bm{u}_h \|_1^2 + 2 \theta_1^2 \| \bm{h}_h \|_1^2 +  \tau \| \bm{w}_h \|^2 + \tau^2 \| \ddiv \bm{w}_h\|^2 + 2\| p_h \|^2 + 2 \theta_2^2 \tau^2 \| \ddiv \bm{w}_h \|^2
	\\ & = 2 \| \bm{u}_h \|_1^2  +  \tau \| \bm{w}_h \|^2  +  \left( 1  +
\frac{2\alpha_{\bm{V}}^2}{9}  \right) \tau^2 \| \ddiv \bm{w}_h \|^2 +  \left( 2
\frac{\beta_{\bm{V}}^2}{C_{\bm{V}}^2} + 2  \right) \| p_h \|^2
	\\ & \leq C_2 \| \left(  \bm{u}_h, \bm{w}_h, p_h \right) \|_{\tau}^2,
\end{align*}
where $\displaystyle C_2:= \max \left\{ 2,  1 +  \frac{2 \alpha^2_{\bm{V}}}{9},  2 \frac{\beta_{\bm{V}}^2}{C_{\bm{V}}^2} + 2  \right \}  $.
Then \eqref{infsup-B} follows with $\gamma : = C_1 C_2^{\frac{1}{2}} $.

Moreover, it is easy to show that the bilinear form $B(\bm{u}_h, \bm{w}_h, p_h; \bm{v}_h, \bm{r}_h, q_h)$ is continuous, therefore, we can conclude that the three-field formulation is well-posed.
\end{proof}

\begin{remark}
  The continuity condition~\eqref{ine:continuity-a_h} follows from the
  definition of bilinear form and the corresponding norms. The coercivity
  condition~\eqref{ine:coercivity} follows from discrete Korn's
  equality, which hinges on the stabilization provided by the
   jump-jump term $\ajump(\cdot,\cdot)$. We refer to
  \cite{HansboP_LarsonM-2003aa} for details on this.  The
  condition~\eqref{ine:mass-lump-equiv} follows from the property of
  the lumped mass procedure and we refer to
  \cite{2006BrezziF_FortinM_MariniL-aa} for details.  The last
  condition \eqref{ine:inf-sup-V} is the standard inf-sup condition
  for the nonconforming finite element methods for solving the Stokes
  equation, see \cite{1973CrouzeixM_RaviartP-aa} for details.
\end{remark}

\subsection{Error Estimates}
To derive the error analysis of the fully discrete scheme
\eqref{eqn:u_h^n}-\eqref{eqn:p_h^n}, following the standard error
analysis of time-dependent problems in
Thom\'{e}e~\cite{2006ThomeeV-aa}, we first define the following
elliptic projections $\bar{\bm{u}}_h \in \bm{V}_h$,
$\bar{\bm{w}}_h \in \bm{W}_h$, and $\bar{p}_h \in Q_h$ for $t>0$ as
usual,
\begin{align}
& a_h(\bar{\bm{u}}_h, \bm{v}_h) - (\bar{p}_h, \ddiv \bm{v}_h) =
a_h(\bm{u}, \bm{v}_h) - (p, \ddiv \bm{v}_h),
\quad \forall \bm{v}_h \in \bm{V}_h, \label{eqn:elliptic-proj-u}\\
& (K^{-1} \bar{\bm{w}}_h, \bm{r}_h )_h - (\bar{p}_h, \ddiv \bm{r}_h) =
(K^{-1} \bm{w}, \bm{r}_h ) - (p, \ddiv \bm{r}_h),
\quad \forall \bm{r}_h \in \bm{W}_h, \label{eqn:elliptic-proj-w}\\
& (\ddiv \bar{\bm{w}}_h, q_h) =  (\ddiv \bm{w}, q_h), \quad \forall q_h \in Q_h. \label{eqn:elliptic-proj-p}
\end{align}
Note that the above elliptic projections are actually decoupled;
$\bar{\bm{w}}_h$ and $\bar{p}_h$ are defined by
\eqref{eqn:elliptic-proj-w} and \eqref{eqn:elliptic-proj-p} which is
the mixed formulation of the Poisson equation.  Therefore, the
existence and uniqueness of $\bar{\bm{w}}_h$ and $\bar{p}_h$ follow
directly from the standard results of mixed formulation of the Poisson
equation (for the mass-lumping case, we refer to
\cite{2006BrezziF_FortinM_MariniL-aa} for details).  After $\bar{p}_h$
is defined, $\bar{\bm{u}}_h$ can be determined by solving
\eqref{eqn:elliptic-proj-u} which is a linear elasticity problem, and
the existence and uniqueness of $\bar{\bm{u}}_h$ also follow from the
standard results of the linear elasticity problem.   Now we can split
the errors as follows
\begin{align*}
& \bm{u}(t_n) - \bm{u}_h^n = \left( \bm{u}(t_n) - \bar{\bm{u}}_h(t_n) \right) - \left(  \bm{u}_h^n - \bar{\bm{u}}_h(t_n)  \right) =: \rho_{\bm{u}}^n - e_{\bm{u}}^n, \\
&\bm{w}(t_n) - \bm{w}_h^n = \left( \bm{w}(t_n) - \bar{\bm{w}}_h(t_n) \right) - \left(  \bm{w}_h^n - \bar{\bm{w}}_h(t_n)  \right) =: \rho_{\bm{w}}^n - e_{\bm{w}}^n, \\
&p(t_n) - p_h^n = \left( p(t_n) - \bar{p}_h(t_n) \right) - \left(  p_h^n - \bar{p}_h(t_n)  \right) =: \rho_{p}^n - e_{p}^n.
\end{align*}
For the errors for the elliptic projections, we have, for $t>0$,
\begin{align}
& \| \rho_{\bm{u}} \|_{a_h} \leq c h \left( \| \bm{u} \|_2 + \| p \|_1 \right), \label{ine:rho_u} \\
& \| \rho_{\bm{w}} \| \leq c h \| \bm{w} \|_1, \label{ine:rho_w} \\
& \| \rho_p \| \leq c h \left(  \| p \|_1 + \| \bm{w} \|_1 \right). \label{ine:rho_p}
\end{align}
Note that \eqref{ine:rho_u} follows from the standard error analysis of linear
elasticity problems. \eqref{ine:rho_w} and \eqref{ine:rho_p} follow
from the error analysis of the mixed formulation of Poisson
problems. If the mass-lumping is applied, such error analysis can be
found in \cite{2006BrezziF_FortinM_MariniL-aa}.

We can similarly define the elliptic projection
$\overline{\partial_t \bm{u}}$, $\overline{\partial_t \bm{w}}$, and
$\overline{\partial_t p}$ of $\partial_t \bm{u}$, $\partial_t \bm{w}$,
and $\partial_t p$ respectively. And we have the estimates above also
for $\partial_t \rho_{\bm{u}}$, $\partial_t \rho_{\bm{w}}$, and
$\partial_t \rho_p$ as well, where on the right hand side of the
inequalities we have the norms of $\partial_t \bm{u}$,
$\partial_t \bm{w}$, and $\partial_t p$ instead of the norms of
$\bm{u}$, $\bm{w}$, and $p$ respectively.

We define the following norm on the finite element spaces:
\begin{equation*}
\| (\bm u, \bm w, p) \|_{\tau,h} := \left(  \| \bm u \|_{a_h}^2 + \tau \| \bm w \|_{K^{-1},h}^2 + \| p \|^2  \right)^{1/2},
\end{equation*}
where $\| \bm{w} \|_{K^{-1}, h}^2:= (K^{-1} \bm{w}, \bm{w} )_h$.

Now we need to estimate the errors $e_{\bm{u}}$, $e_{\bm{w}}$, and
$e_{p}$, and then the overall error estimates can be derived by the
triangular inequality.  Next lemma gives the error estimates of
$e_{\bm{u}}$, $e_{\bm{w}}$, and $e_{p}$.

\begin{lemma} \label{lem:error-e}
Let $R_{\bm{u}}^j := \partial_t \bm{u}(t_j) - \frac{\bar{\bm{u}}_h(t_j) - \bar{\bm{u}}_{h}(t_{j-1})}{\tau}$, we have
\begin{equation} \label{ine:error-e}
\|  (e_{\bm u}^n, e_{\bm w}^n, e_p^n) \|_{\tau, h} \leq c \left(  \| e_{\bm u}^0 \|_{a_h} +  \tau \sum_{j=1}^n \| R_{\bm u}^j \|_{a_h}   \right)
\end{equation}
\end{lemma}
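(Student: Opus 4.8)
The plan is to derive an error equation for the triple $(e_{\bm u}^n, e_{\bm w}^n, e_p^n)$ and then apply the inf-sup stability of the composite bilinear form $B(\cdot,\cdot,\cdot;\cdot,\cdot,\cdot)$ established in the theorem above. First I would write down the fully discrete scheme \eqref{eqn:u_h^n}--\eqref{eqn:p_h^n} satisfied by $(\bm u_h^n, \bm w_h^n, p_h^n)$ and subtract the corresponding relations satisfied by the elliptic projections $(\bar{\bm u}_h(t_n), \bar{\bm w}_h(t_n), \bar p_h(t_n))$. Using the defining identities \eqref{eqn:elliptic-proj-u}--\eqref{eqn:elliptic-proj-p} for the elliptic projections together with the continuous equations \eqref{variational1}--\eqref{variational3} at $t=t_n$, the terms involving the exact solution cancel, and one is left with an equation of the form
\begin{align*}
& a_h(e_{\bm u}^n, \bm v_h) - (e_p^n, \ddiv \bm v_h) = 0, \\
& \tau(K^{-1} e_{\bm w}^n, \bm r_h)_h - \tau(e_p^n, \ddiv \bm r_h) = 0, \\
& -(\ddiv e_{\bm u}^n, q_h) - \tau(\ddiv e_{\bm w}^n, q_h) = \tau(R_{\bm u}^n, \ddiv q_h)\text{-type term},
\end{align*}
where the only inhomogeneity comes from the mismatch between the discrete time derivative $\bar\partial_t \bm u_h^n$ and $\partial_t \bm u(t_n)$, i.e. from $R_{\bm u}^n$ (here I must be slightly careful: since $Q_h$ consists of piecewise constants and $\ddiv$ maps into $Q_h$, the consistency term should be rewritten via $(\ddiv R_{\bm u}^n, q_h)$, which I would bound by $\|R_{\bm u}^n\|_{a_h}\|q_h\|$ using the fact that $\ddiv$ is bounded on $\bm V_h$ with respect to $\|\cdot\|_{a_h}$).

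Next I would recognize that these three relations say precisely that
\[
B(e_{\bm u}^n, e_{\bm w}^n, e_p^n; \bm v_h, \bm r_h, q_h) - B(e_{\bm u}^{n-1}, 0, 0; 0, 0, q_h\text{ part}) = (\text{consistency term in }R_{\bm u}^n),
\]
or, more cleanly, that the pair $(e_{\bm u}^n - e_{\bm u}^{n-1})$ enters through the third equation. I would then test with the specific choice of $(\bm v_h, \bm r_h, q_h)$ produced by the inf-sup argument in the proof of the Theorem (the one realizing $\gamma\|(e_{\bm u}^n, e_{\bm w}^n, e_p^n)\|_\tau$), and use \eqref{infsup-B} — adapted to the norm $\|\cdot\|_{\tau,h}$, which is equivalent to $\|\cdot\|_\tau$ by \eqref{ine:coercivity}, \eqref{ine:continuity-a_h} and \eqref{ine:mass-lump-equiv} — to obtain
\[
\|(e_{\bm u}^n, e_{\bm w}^n, e_p^n)\|_{\tau,h} \leq c\left( \|e_{\bm u}^n - e_{\bm u}^{n-1}\|_{a_h} + \tau\|R_{\bm u}^n\|_{a_h} \right),
\]
after absorbing the $\bar\partial_t e_{\bm u}^n$ contribution. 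The remaining step is a discrete Grönwall / telescoping argument: I would write $\|e_{\bm u}^n - e_{\bm u}^{n-1}\|_{a_h} \leq \|e_{\bm u}^n\|_{a_h} + \|e_{\bm u}^{n-1}\|_{a_h}$ is too crude, so instead I would derive from the first error equation (testing with $\bm v_h = e_{\bm u}^n - e_{\bm u}^{n-1}$ and using the third equation with $q_h = e_p^n$ and $q_h = e_p^{n-1}$) a bound of the form $\|\bar\partial_t e_{\bm u}^n\|_{a_h}^2 \lesssim (\text{terms controlled by } R_{\bm u}^j)$, then sum over $j = 1,\dots,n$ and telescope, picking up the initial error $\|e_{\bm u}^0\|_{a_h}$. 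Multiplying through by $\tau$ and summing gives exactly \eqref{ine:error-e}.

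The main obstacle I anticipate is the handling of the discrete-in-time structure so that the constant $c$ remains independent of both $h$ and $\tau$. The difficulty is that the natural energy quantity controlled by testing the elasticity equation with $\bar\partial_t e_{\bm u}^n$ is $\|\bar\partial_t e_{\bm u}^n\|_{a_h}$ weighted against $(\ddiv \bar\partial_t e_{\bm u}^n, e_p^n)$, and one must feed in the second and third error equations (the Darcy and mass-conservation rows) at both time levels $n$ and $n-1$ to close the estimate — essentially reproducing, at the discrete level, the dissipation identity $\frac{d}{dt}\|\ddiv\bm u\|^2 + \|\bm w\|^2_{K^{-1}} \lesssim \dots$ that underlies well-posedness of Biot's model. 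Getting the telescoping to work cleanly, and making sure the inf-sup constant $\gamma$ from the Theorem (rather than a coercivity constant that would degenerate) is what controls $e_p^n$, is where the care is needed; the rest is routine Cauchy–Schwarz, Young's inequality, and summation over time steps.
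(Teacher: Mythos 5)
Your setup (error equations obtained by combining the discrete scheme, the continuous problem tested with discrete functions, and the elliptic projections, with $R_{\bm u}^n$ as the only consistency term) matches the paper. The gap is in how you close the per-step estimate. Your main route --- testing with the inf-sup triple of the composite form $B$ and invoking \eqref{infsup-B} --- does not close with constants uniform in $\tau$: after multiplying the third error equation by $\tau$, the inhomogeneity is $(\ddiv e_{\bm u}^{n-1},q_h)+\tau(\ddiv R_{\bm u}^n,q_h)$, so you get either the circular bound $\|(e_{\bm u}^n,e_{\bm w}^n,e_p^n)\|_{\tau}\le c\bigl(\|e_{\bm u}^n-e_{\bm u}^{n-1}\|_{a_h}+\tau\|R_{\bm u}^n\|_{a_h}\bigr)$ (circular because $e_{\bm u}^n-e_{\bm u}^{n-1}$ contains the unknown current error), or, treating $\ddiv e_{\bm u}^{n-1}$ as data, a recursion of the form $\|e_{\bm u}^n\|_1\le \gamma^{-1}\|e_{\bm u}^{n-1}\|_1+\dots$ whose iteration produces a factor $\gamma^{-n}$ blowing up as $\tau\to 0$ with $T=n\tau$ fixed; since $\gamma^{-1}$ is not $1+O(\tau)$, no discrete Gr\"onwall argument repairs this. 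Your fallback energy sketch points in the right direction but targets the wrong quantity: a bound on $\|\bar{\partial}_t e_{\bm u}^n\|_{a_h}^2$ is neither obtainable from these equations nor needed.

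What is actually needed (and is the paper's short argument) requires neither the composite inf-sup nor Gr\"onwall. Test the three error equations with $\bm v_h=\bar{\partial}_t e_{\bm u}^n$, $\bm r_h=e_{\bm w}^n$, $q_h=-e_p^n$: all pressure--divergence cross terms cancel exactly and the Darcy term contributes with a good sign, leaving $\|e_{\bm u}^n\|_{a_h}^2+\tau\|e_{\bm w}^n\|_{K^{-1},h}^2=a_h(e_{\bm u}^n,e_{\bm u}^{n-1})+\tau(\ddiv R_{\bm u}^n,e_p^n)$. The second missing ingredient is the per-step pressure bound: applying \eqref{ine:inf-sup-V} to the \emph{first} error equation gives $\|e_p^n\|\le c\,\|e_{\bm u}^n\|_{a_h}$ directly --- it is this displacement--pressure inf-sup, not the constant $\gamma$ of the Theorem, that controls $e_p^n$. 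Inserting this and dividing by $\|e_{\bm u}^n\|_{a_h}$ yields $\|e_{\bm u}^n\|_{a_h}\le \|e_{\bm u}^{n-1}\|_{a_h}+c\,\tau\|R_{\bm u}^n\|_{a_h}$ with leading constant exactly one, so plain telescoping gives the sum over $j$ with the initial term $\|e_{\bm u}^0\|_{a_h}$, and the same identity together with the pressure bound recovers the $e_{\bm w}^n$ and $e_p^n$ contributions to $\|\cdot\|_{\tau,h}$. Without these two steps (the cancellation-inducing test functions with the time-difference in the elasticity slot, and the $e_p^n$ bound through the first error equation) the proposal does not yield \eqref{ine:error-e}.
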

\begin{proof}
  Choosing $\bm v = \bm v_h$ in \eqref{variational1},
  $\bm r = \bm r_h$ in \eqref{variational2}, and $q = q_h$ in
  \eqref{variational3}, and subtracting these three equations from
  \eqref{eqn:u_h^n}, \eqref{eqn:w_h^n}, and \eqref{eqn:p_h^n}, we have
\begin{align}
& a_h(e_{\bm u}^n, \bm v_h) - (e_{p}^n, \ddiv \bm v_h) = 0, \label{eqn:eu}\\
& (K^{-1} e_{\bm w}^n, \bm r_h)_h - (e_p^n, \ddiv \bm r_h) = 0,\label{eqn:ew} \\
& - (\ddiv \bar{\partial}_t e_{\bm u}^n, q_h) - (\ddiv e_{\bm w}^n, q_h) = - (\ddiv R_{\bm u}^n, q_h). \label{eqn:ep}
\end{align}
Choosing $\bm v_h = \bar{\partial}_t e_{\bm u}^n$,
$\bm r_h = e_{\bm w}^n$ and $q_h = -e_p^n$ in \eqref{eqn:eu},
\eqref{eqn:ew}, and \eqref{eqn:ep}, respectively, and adding them, we
have
\begin{align*}
\| e_{\bm u}^n \|_{a_h}^2 + \tau \| e_{\bm w}^n \|_{K^{-1},h}^2 & =
a_h(e_{\bm u}^n, e_{\bm u}^{n-1}) + \tau (\ddiv R_{\bm u}^n, e_p^n)  \leq \| e_{\bm u}^n \|_{a_h} \| e_{\bm u}^{n-1} \|_{a_h} + \tau \| \ddiv R_{\bm u}^n \| \| e_p^n \|.
\end{align*}
Thanks to the inf-sup conditions \eqref{ine:inf-sup-V} and
\eqref{eqn:eu}, we have
\begin{equation} \label{ine:infsup-ep}
\| e_p^n \| \leq c \sup_{0 \neq \bm v_h \in \bm V_h} \frac{(e_p^n, \ddiv \bm v_h)}{\| \bm v_h \|_{a_h}} = c \sup_{0 \neq \bm v_h \in \bm V_h} \frac{a_h(e_{\bm u}^n, \bm v_h)}{\| \bm v_h \|_{a_h}} = c \| e_{\bm u}^n \|_{a_h}.
\end{equation}
Therefore, we have
\begin{align} \label{ine:eu-ew}
\| e_{\bm u}^n \|_{a_h}^2 + \tau \| e_{\bm w}^n \|_{K^{-1},h}^2 \leq \| e_{\bm u}^n \|_{a_h} \left( \| e_{\bm u}^{n-1} \|_{a_h} + c \tau \| R_{\bm u}^n \|_{a_h}  \right).
\end{align}
This implies
\begin{align*}
\| e_{\bm u}^n \|_{a_h}  & \leq  \| e_{\bm u}^{n-1} \|_{a_h} + c \tau \| R_{\bm u}^n \|_{a_h}.
\end{align*}
By summing over all time steps, we have
\begin{align} \label{ine:eu-all}
\| e_{\bm u}^n \|_{a_h}  & \leq  \| e_{\bm u}^0 \|_{a_h} + c \tau  \sum_{j=1}^n \| R_{\bm u}^j \|_{a_h}.
\end{align}
Combining \eqref{ine:infsup-ep}, \eqref{ine:eu-ew}, and \eqref{ine:eu-all}, we have the estimate \eqref{ine:error-e}.
\end{proof}

Following the same procedures of Lemma 8 in \cite{RGHZ2016}, we have
\begin{equation}\label{ine:Ru}
\sum_{j=1}^n \| R_{\bm u}^j \|_{a_h} \leq c \left(  \int_0^{t_n} \| \partial_{tt} \bm u  \|_1 \mathrm{d}t + \frac{1}{\tau} \int_0^{t_n} \| \partial_t \rho_{\bm u} \|_1 \mathrm{d}t \right).
\end{equation}
Then we can derive the error estimates as shown in the following theorem.

\begin{theorem}\label{thm:error}
  Let $\bm u$, $\bm w$, and $p$ be the solutions of
  \eqref{variational1}-\eqref{variational3} and $\bm u_h^n$,
  $\bm w_h^n $, and $p_h^n$ be the solutions of
  \eqref{eqn:u_h^n}-\eqref{eqn:p_h^n}. If the following regularity assumptions hold,
\begin{align*}
&\bm u(t) \in L^{\infty}\left((0, T], \mathbf{H}_0^1(\Omega) \right) \cap L^{\infty}\left((0, T], \mathbf{H}^2(\Omega) \right), \\
& \partial_t \bm u \in L^{1}\left((0, T], \mathbf{H}^2(\Omega) \right), \ \partial_{tt} \bm u \in L^{1}\left((0, T], \mathbf{H}^1(\Omega) \right), \\
&  \bm w(t) \in L^{\infty}\left((0, T], H_0(\ddiv, \Omega) \right) \cap L^{\infty}\left((0, T], \mathbf{H}^1(\Omega) \right), \\
& p \in L^{\infty}\left((0, T], H^1(\Omega) \right), \ \partial_t p \in L^{1}\left((0, T], H^1(\Omega) \right),
\end{align*}
then we have the error estimates
\begin{align} \label{ine:error}
& \|( \bm u(t_n) - \bm u_h^n, \bm w(t_n) - \bm w_h^n, p(t_n) - p_h^n ) \|_{\tau, h}  \leq c \left\{ \| e_{\bm u}^0 \|_{a_h} +  \tau \int_{0}^{t_n} \| \partial_{tt} \bm u \|_1 \mathrm{d}t  \right. \nonumber  \\
&\qquad  \left. + h \left[  \| \bm u \|_2 + \tau^{1/2} \| \bm w\|_1 + \| \bm w\|_1 + \| p \|_1 + \int_0^{t_n} \left( \| \partial_t \bm u \|_2 + \| \partial_t p \|_1  \right) \mathrm{d}t \right] \right\}.
\end{align}
\end{theorem}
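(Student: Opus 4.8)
The plan is to follow the classical elliptic–projection splitting of Thom\'ee and simply assemble the ingredients already collected above. First I would apply the triangle inequality in the mesh-dependent norm $\|\cdot\|_{\tau,h}$ to the splitting $\bm u(t_n)-\bm u_h^n=\rho_{\bm u}^n-e_{\bm u}^n$, $\bm w(t_n)-\bm w_h^n=\rho_{\bm w}^n-e_{\bm w}^n$, $p(t_n)-p_h^n=\rho_p^n-e_p^n$, obtaining
\[
\|(\bm u(t_n)-\bm u_h^n,\ \bm w(t_n)-\bm w_h^n,\ p(t_n)-p_h^n)\|_{\tau,h}
\le \|(\rho_{\bm u}^n,\rho_{\bm w}^n,\rho_p^n)\|_{\tau,h}
+\|(e_{\bm u}^n,e_{\bm w}^n,e_p^n)\|_{\tau,h},
\]
so the projection error and the discrete error can be handled independently.

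For the projection part I would estimate each component of $\|(\rho_{\bm u}^n,\rho_{\bm w}^n,\rho_p^n)\|_{\tau,h}$ directly from \eqref{ine:rho_u}--\eqref{ine:rho_p}, using the spectral equivalence \eqref{ine:mass-lump-equiv} to pass between $\|\cdot\|_{K^{-1},h}$ and $\|\cdot\|$ so that $\tau\,\|\rho_{\bm w}^n\|_{K^{-1},h}^2\le C_K\,\tau\,\|\rho_{\bm w}^n\|^2\le c\,\tau h^2\|\bm w\|_1^2$. This immediately yields the non-integral $h$-terms of \eqref{ine:error}, namely $h(\|\bm u\|_2+\|p\|_1)$ from $\rho_{\bm u}$, $\tau^{1/2}h\|\bm w\|_1$ from $\rho_{\bm w}$, and $h(\|p\|_1+\|\bm w\|_1)$ from $\rho_p$.

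For the discrete error I would invoke Lemma~\ref{lem:error-e}, which gives $\|(e_{\bm u}^n,e_{\bm w}^n,e_p^n)\|_{\tau,h}\le c\big(\|e_{\bm u}^0\|_{a_h}+\tau\sum_{j=1}^n\|R_{\bm u}^j\|_{a_h}\big)$, and then feed in \eqref{ine:Ru} to bound $\tau\sum_{j=1}^n\|R_{\bm u}^j\|_{a_h}$ by $c\,\tau\int_0^{t_n}\|\partial_{tt}\bm u\|_1\,\mathrm dt+c\int_0^{t_n}\|\partial_t\rho_{\bm u}\|_1\,\mathrm dt$. The last integral is controlled by the time-differentiated analogue of \eqref{ine:rho_u} (valid because the elliptic projection commutes with $\partial_t$), i.e. $\|\partial_t\rho_{\bm u}\|_1\le c\,\|\partial_t\rho_{\bm u}\|_{a_h}\le c\,h(\|\partial_t\bm u\|_2+\|\partial_t p\|_1)$; this is precisely where the regularity hypotheses $\partial_t\bm u\in L^1((0,T],\mathbf H^2)$ and $\partial_t p\in L^1((0,T],H^1)$, as well as $\partial_{tt}\bm u\in L^1((0,T],\mathbf H^1)$, are used. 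Collecting the two contributions and keeping $\|e_{\bm u}^0\|_{a_h}$ as it stands (it appears on the right-hand side of \eqref{ine:error}) produces the claimed estimate; the remaining work is only the arithmetic of absorbing constants.

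The one place that needs genuine care — and which I regard as the main obstacle — is establishing \eqref{ine:Ru}, the discrete-in-time consistency estimate for $R_{\bm u}^j=\partial_t\bm u(t_j)-\tau^{-1}(\bar{\bm u}_h(t_j)-\bar{\bm u}_h(t_{j-1}))$. I would write $R_{\bm u}^j=\big(\partial_t\bm u(t_j)-\tau^{-1}(\bm u(t_j)-\bm u(t_{j-1}))\big)+\tau^{-1}\big(\rho_{\bm u}(t_j)-\rho_{\bm u}(t_{j-1})\big)$; the first bracket is the backward-Euler truncation error, which Taylor expansion expresses as $\tau^{-1}\int_{t_{j-1}}^{t_j}(t-t_{j-1})\,\partial_{tt}\bm u(t)\,\mathrm dt$, so that $\sum_j\|\cdot\|_{a_h}\le\int_0^{t_n}\|\partial_{tt}\bm u\|_1\,\mathrm dt$; the second bracket equals $\tau^{-1}\int_{t_{j-1}}^{t_j}\partial_t\rho_{\bm u}(t)\,\mathrm dt$, so that $\tau\sum_j\|\cdot\|_{a_h}\le\int_0^{t_n}\|\partial_t\rho_{\bm u}\|_{a_h}\,\mathrm dt$, which is exactly the argument of Lemma~8 in \cite{RGHZ2016}. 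A subtlety worth flagging is that $a_h(\cdot,\cdot)$, and hence $\|\cdot\|_{a_h}$, must be read in the broken (elementwise strain plus jump) sense so that it is meaningful on $\rho_{\bm u}=\bm u-\bar{\bm u}_h\notin\bm V_h$; since $\bm u$ is $H^1$-conforming its jump contributions vanish, and $\|\rho_{\bm u}\|_{a_h}$ reduces to the broken elasticity energy of $\bm u-\bar{\bm u}_h$ together with the jumps of $\bar{\bm u}_h$, so the stated projection estimates apply verbatim.
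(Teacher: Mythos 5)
Your proposal is correct and follows essentially the same route as the paper: split the error via the elliptic projections, bound $(\rho_{\bm u}^n,\rho_{\bm w}^n,\rho_p^n)$ by \eqref{ine:rho_u}--\eqref{ine:rho_p} (with the norm equivalence \eqref{ine:mass-lump-equiv}), bound $(e_{\bm u}^n,e_{\bm w}^n,e_p^n)$ by Lemma~\ref{lem:error-e} together with \eqref{ine:Ru} and the time-differentiated projection estimates, and combine by the triangle inequality. The only additions beyond the paper's one-line proof are your sketch of \eqref{ine:Ru} (which the paper cites from Lemma~8 of \cite{RGHZ2016}) and the remark about reading $a_h(\cdot,\cdot)$ in the broken sense, both of which are consistent with the paper's argument.
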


\begin{proof}
  The estimate \eqref{ine:error} follows directly from
  \eqref{ine:error-e}, \eqref{ine:Ru},
  \eqref{ine:rho_u}-\eqref{ine:rho_p}, and triangle inequality.
\end{proof}

\section{Numerical Tests} \label{sec:numerics}

In this section we consider two test cases verifying different aspects
of the questions and the analysis we have discussed earlier.  The
first numerical experiment uses analytical solution of a poroelastic
problem and confirms the accuracy of the discretization and the
results of error analysis presented in Section~\ref{sec:fully}. The
second test shows that the mass lumping technique, which can be viewed
as a stabilization, provides oscillation-free numerical solution for
the pressure field. Both numerical experiments take place in the unit
square as a computational domain, $\Omega = (0,1)\times(0,1)$; the
triangulation of $\Omega$ is obtained by partitioning $\Omega$ using
$n_x\times n_y$ rectangular grid, followed by splitting each
rectangle in two triangles by using one of its diagonals.

\subsection{Model Problem with Analytical Solution for the Convergence Study}

In this first numerical test we illustrate the theory described in
 Section~\ref{sec:fully}. We consider 
the poroelastic
problem~\eqref{three-field1}-\eqref{three-field3}, where
the source terms ${\mathbf g}$ and $f$ are chosen so that
the components of the exact solution, $\bm{u} = (u, v)^T$, and, $p$, are
\begin{eqnarray}
&u(x,y,t) = v(x,y,t) = e^{-t}\sin \pi x\, \sin \pi y,& \label{analytical_sol1}\\
&p(x,y,t) = e^{-t}(\cos \pi y + 1).& \label{analytical_sol2}
\end{eqnarray}
We prescribe homogeneous Dirichlet conditions for the
displacements $u$. Next, the whole boundary $\partial\Omega$
except its north edge is assumed impermeable, that is, $\nabla p \cdot {\mathbf n} = 0$
(equivalent to essential boundary conditions for the fluid flux
$\bm{w}$).  The material properties are:
Young modulus $E=1$ and the Poisson ratio $\nu = 0.2$.
and the permeability is assumed to be $\kappa = 1$.  The numerical
test provides estimates on the error between the exact solution
given in~\eqref{analytical_sol1}-\eqref{analytical_sol2} and the
numerical solution on progressively refined grids with $n_x=n_y$
ranging from $4$ to $64$. The time-steps ($\tau = T/n_t$) vary from
$1/4$ to $1/64$.  The errors in the displacements, measured in energy
norm, and, the errors in the pressure, measured in the $L^2$-norm, are
reported in Table~\ref{errors}.
\begin{table}[htb]
\centering
\begin{tabular}{||c|c|c|c|c|c||}
  \hline
  $n_x\times n_y \times n_t$ & $4\times 4 \times 4$ & $8\times 8 \times 8$ & $16\times 16 \times 16$ & $32\times 32 \times 32$ & $64\times 64 \times 64$ \\
  \hline
  $||{\mathbf u}-{\mathbf u}_h||_{a_h}$ & $0.2060$ & $0.1073$ & $0.0546$ & $0.0275$ & $0.0138$ \\
  \hline
  $||p-p_h||$ & $0.0476$ & $0.0194$ & $0.0092$ & $0.0045$ & $0.0023$ \\
  \hline
\end{tabular}
\caption{Energy norm of the displacements' error and $L^2$-norm of the pressure error for different spatial-temporal grids.}
\label{errors}
\end{table}
From the results reported in the Table~\ref{errors}, we observe first
order convergence, which is consistent with the error estimates obtained in
previous section.

\subsection{Poroelastic Problem on a Square Domain with a Uniform Load}

The second numerical experiment models an structure which drains on the
north (top) edge of the boundary. On this part of the boundary we also
apply a uniform unit load. More specifically we have,
$$
p = 0, \quad {\boldsymbol \sigma}\cdot {\mathbf n} = (0,-1)^t, \;
\hbox{on} \; \Gamma_1 = [0,1]\times{1},
$$
On the rest of the boundary we have impermeable boundary conditions for the pressure and
we also assume rigidity, namely, the rest of the boundary conditions are:
$$
\nabla p \cdot {\mathbf n} = 0, \quad {\mathbf u}={\mathbf 0}, \;
\hbox{on} \; \Gamma_2 = \partial \Omega \setminus \Gamma_1.
$$
For clarity, the prescribed boundary conditions are shown in
Figure~\ref{domain_test2}.
\begin{figure}[htb]
\begin{center}
\includegraphics*[width = 0.4\textwidth]{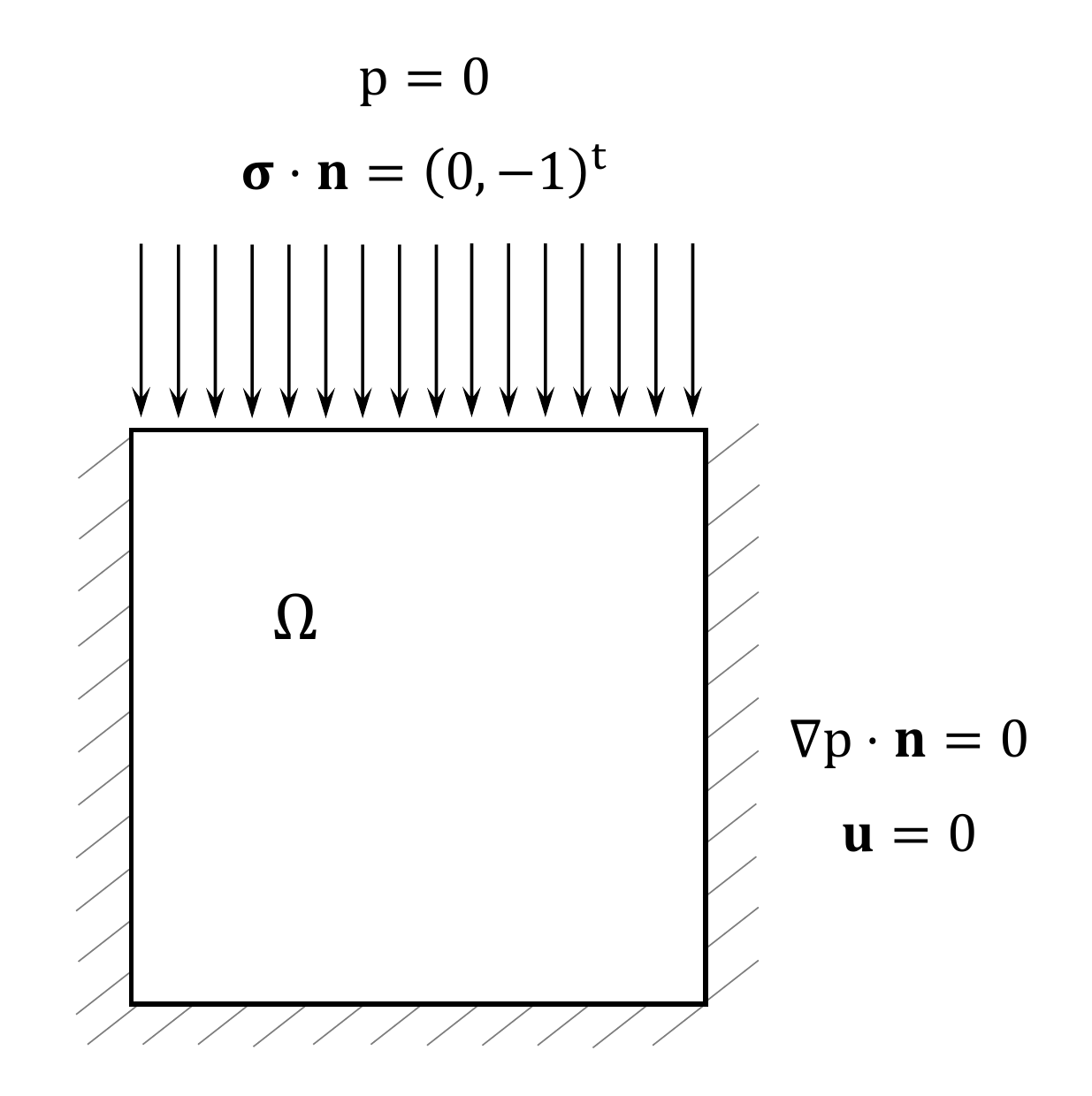}
\caption{Computational domain and boundary conditions corresponding to the second test problem.}
\label{domain_test2}
\end{center}
\end{figure}
\begin{figure}[htb]
\begin{center}
\begin{tabular}{cc}
\includegraphics*[width = 0.4\textwidth]{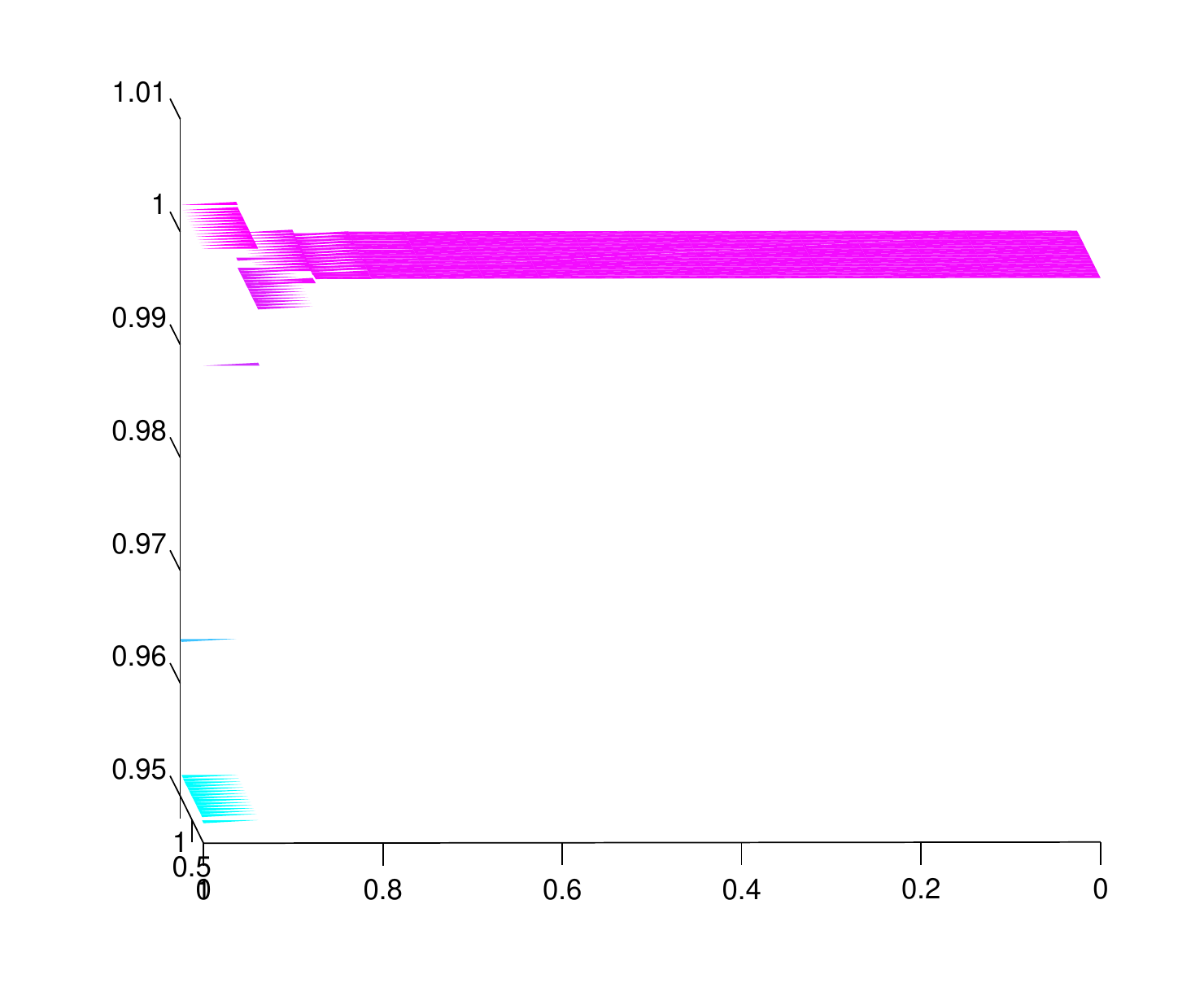}
&
\includegraphics*[width = 0.4\textwidth]{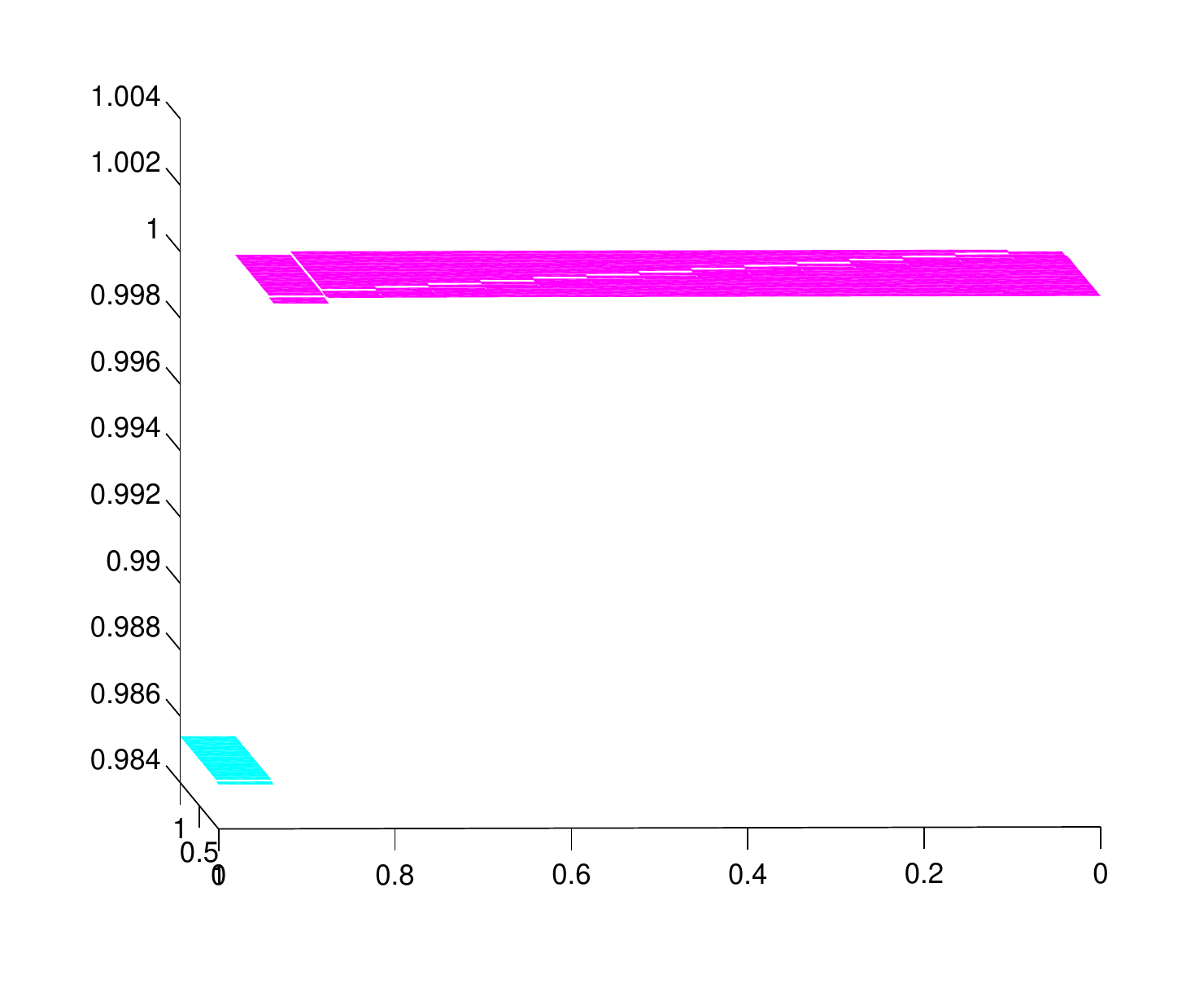}
\\
(a)
&
(b)
\end{tabular}
\caption{Numerical solution for the pressure to the second numerical experiment (a) without mass lumping and (b) with mass lumping.}
\label{pressure_experiment2}
\end{center}
\end{figure}

Here we aim to illustrate the stabilization effect of the mass-lumping
performed in the Raviart-Thomas-\Nedelec space (see~\eqref{lump}).  As
oscillations of the pressure usually occur when the material has low
permeability and for a short time interval, we set the final time
as $T=10^{-3}$ and perform only one time step. The value of the
permeability is $\kappa = 10^{-6}$ and the rest of the material
parameters (Lam\'e coefficients) are $\lambda = 12500$ and
$\mu = 8333$.

In Figure~\ref{pressure_experiment2}(a), we show the
approximation for the pressure field obtained without mass lumping.
We clearly observe small oscillations close to the boundary where
the load is applied. Introducing mass lumping in computing the
fluid flux completely removes these oscillations. This is also clearly
seen in Figure~\ref{pressure_experiment2}(b).

Another test is illustrated in Figure~\ref{pressure_experiment3}(a)-(b). We
show the numerical solutions for the same problem but with variable
permeability, i.e. $\kappa(x) = 10^{-3}$,
$x\in ((0,0.5]\times (0,0.5]) \cup ([0.5,1)\times [0.5,1))$ and
$\kappa=1$ in the rest of the domain. While the small oscillations in the solution shown in
Figure~\ref{pressure_experiment3}(a) are difficult to see, the lumped mass solution
shown in Figure~\ref{pressure_experiment3}(b) is clearly oscillation free.
\begin{figure}[htb]
\begin{center}
\begin{tabular}{cc}
\includegraphics*[width = 0.4\textwidth]{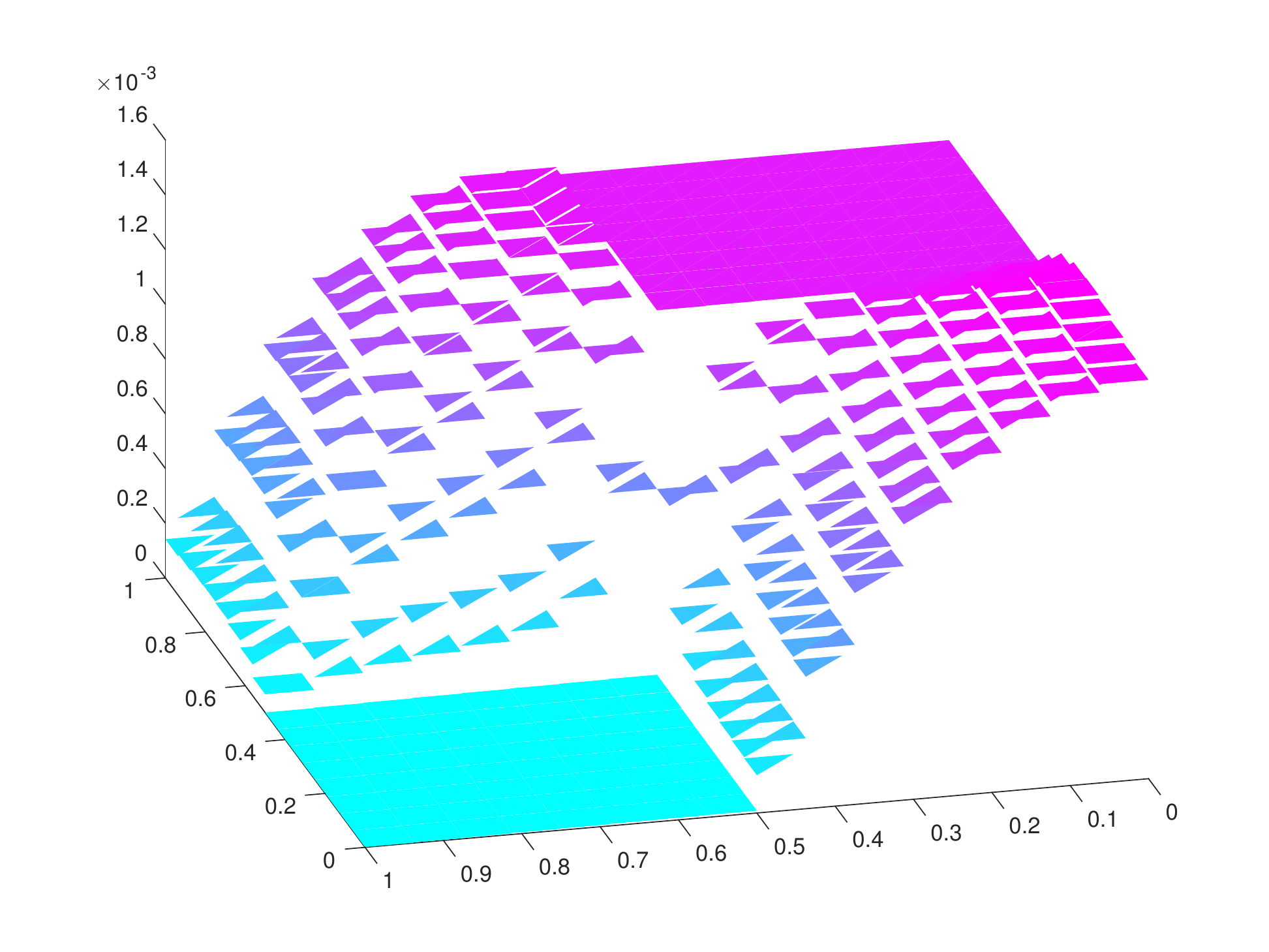}
&
\includegraphics*[width = 0.4\textwidth]{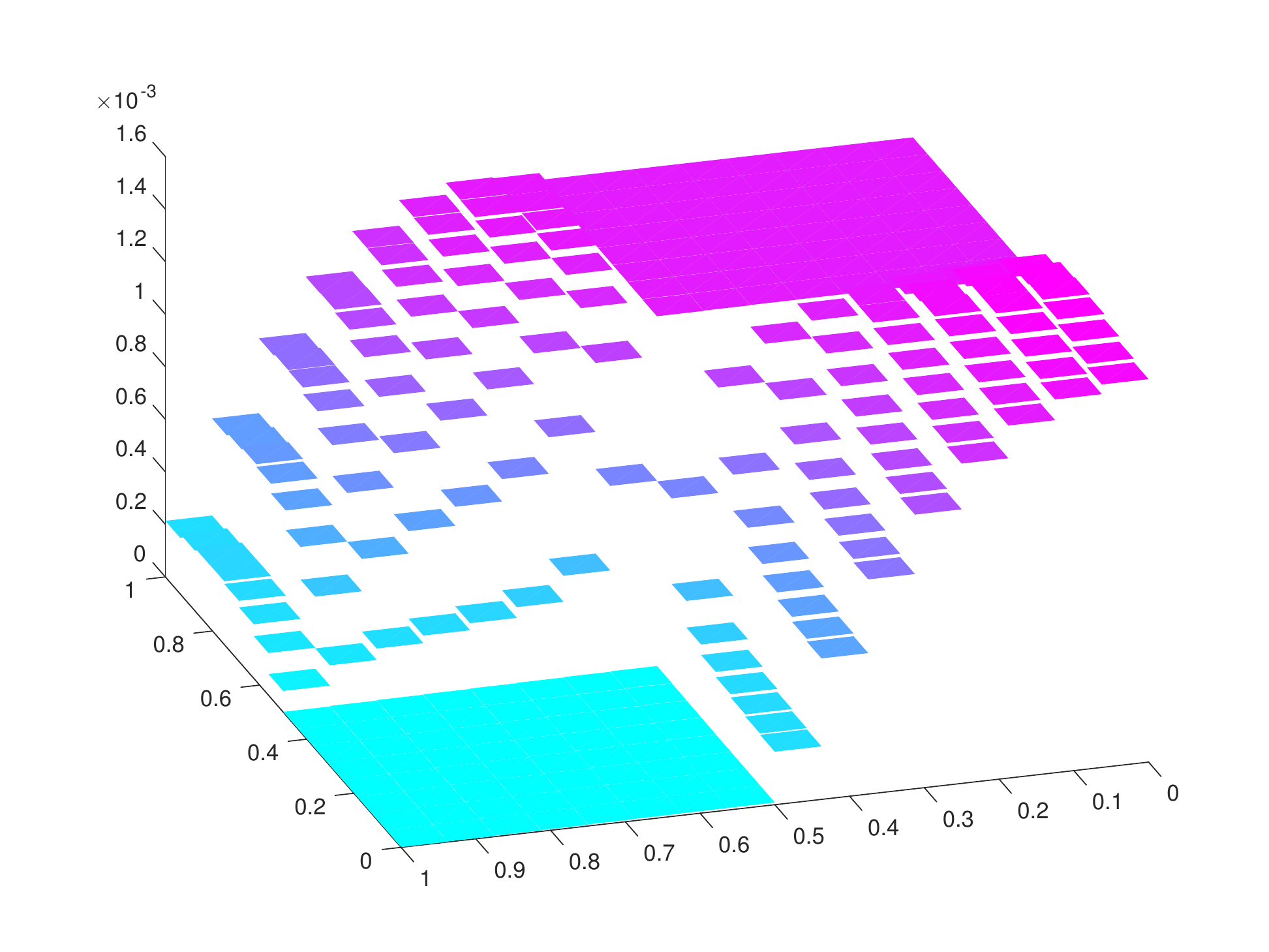}
\\
(a)
&
(b)
\end{tabular}
\caption{Numerical solution for the pressure for the example with variable permeability:  (a) without mass lumping and (b) with mass lumping.}
\label{pressure_experiment3}
\end{center}
\end{figure}

Finally, let us remark that, while illustrating that the mass-lumping
techniques remove the oscillations in the numerical solution, even in
this simple case there is no supporting theory showing that the
discretization we have analyzed is in fact monotone and will provide
oscillation free approximation to the pressure. This is a difficult
and interesting mathematical question which is still open.

\section{Conclusions} \label{sec:conclusion} In this paper, we have
proposed a nonconforming finite element method for the three-field
formulation for the Biot's model.  We use the lowest order finite
elements: piecewise constant for the pore pressure paired with the lowest order
Raviart-Thomas-\Nedelec elements for the Darcy's velocity and the
nonconforming Crouzeix-Raviart elements for the displacements.  The
time discretization is an implicit (backward) Euler method.  The
results on stability and error estimates, however, hold for other
implicit time stepping methods as well.  For the resulting fully
discrete scheme, we have shown uniform inf-sup condition for the
discrete problem. Further, based on standard decomposition of the
error for the time-dependent problem, we derived optimal order
error estimates in both space and time.  Finally, we presented
numerical tests confirming the theoretical estimates, and, in addition
showing that the mass lumping technique to eliminate the Darcy
velocity leads to an oscillation-free approximation of the pressure.

\section{Acknowledgements}
 Ludmil Zikatanov gratefully acknowledges the support for this work
 from the Department of Mathematics at Tufts University and the
 Applied Mathematics Department at University of Zaragoza.

\section*{\refname} 

\bibliographystyle{elsarticle-num}
\bibliography{bib_Poro_three_field}

\end{document}